\newcommand{\bbR}{{\mathbb{R}}}
\newcommand{\bbN}{{\mathbb{N}}}
\newcommand{\Lie}{{\mathrm {Lie}}}
\newcommand{\diver}{{\mathrm {div}}}
\newcommand{\calA}{{\mathcal{A}}}
\newcommand{\calD}{{\mathcal{D}}}
\newcommand{\calL}{{\mathcal{L}}}
\newcommand{\calO}{{\mathcal{O}}}
\newcommand{\calQ}{{\mathcal{Q}}}
\newcommand{\calW}{{\mathcal{W}}}
\newcommand{\calX}{{\mathcal{X}}}
\newcommand{\calZ}{{\mathcal{Z}}}
\newtheorem {theorem}{Theorem}[section]
\newtheorem {proposition}[theorem]{Proposition}
\newtheorem{example}{Example}
\newtheorem {corollary}[theorem]{Corollary}
\newtheorem {definition}[theorem]{Definition}
\newtheorem{lemma}[theorem]{Lemma}
\newtheorem {remark}[theorem]{Remark}
\title{Small time asymptotic on the diagonal for H\"{o}rmander's type hypoelliptic operators}
\author{Elisa Paoli}
\begin{document}
\maketitle
\begin{abstract}
We compute the small time asymptotic of the fundamental solution of H\"{o}rmander's type hypoelliptic operators with drift, at a stationary point, $x_0$, of the drift field. We show that the order of the asymptotic depends on the controllability of an associated control problem and of its approximating system. If the control problem of the approximating system is controllable at $x_0$, then so is also the original control problem, and in this case we show that the fundamental solution blows up as $t^{-N/2}$, where $N$ is a number determined by the Lie algebra at $x_0$ of the fields, that define the hypoelliptic operator.
\end{abstract}

\section{Introduction}\label{intro}
In the following paper we will study the small time asymptotic on the diagonal of the fundamental solution to an hypoelliptic operator with drift. Let $M$ be a $n$ dimensional smooth manifold and let $f_0,f_1,\ldots,f_k$ be smooth vector fields on $M$, where $f_0$ is usually called \emph{drift} field. We will consider the following operator on $C^\infty(\bbR^+\times M)$
\begin{equation}
\frac{\partial \varphi}{\partial t}-f_0(\varphi)-\frac{1}{2}\sum_{i=1}^k f_i^2(\varphi) \qquad \forall \varphi\in C^\infty(\bbR^+\times M).
\label{eq:operator}
\end{equation}

We will assume that the vector fields $f_0,f_1,\ldots,f_k$ satisfy H\"{o}rmander's condition of hypoellipticity  (see \cite{article:Hormander}), that is
\begin{equation}
\Lie_x\{\frac{\partial}{\partial t}-f_0,f_1,\ldots,f_k\}=\bbR\times T_xM \qquad \forall x\in M.
\label{eq:hormander}
\end{equation}
Under this hypothesis the operator in \eqref{eq:operator} is hypoelliptic and, as shown for example in \cite{book:IkedaWatanabe} and \cite{article:Hormander}, it admits a well defined smooth fundamental solution, $p(t,x,y)$, for small time, that is given by the probability density of the stochastic process $\xi_t$ at time $t$, starting at $x$ at time $0$ and driven by the stochastic differential equation
$$d\xi_t=f_0(\xi_t)dt+\sum_{i=1}^k f_i(\xi_t) \circ dw_i$$
where $w=(w_1,\ldots,w_k)$ denotes a $k$-dimensional Brownian motion and $\circ$ denotes the integration in the Stratonovich sense.

Given a stationary point, $x_0$, of the drift field $f_0$, i.e. $f_0(x_0)=0$, the goal of this paper is to study the behaviour of the heat kernel, $p(t,x,y)$, for small time, in $x_0$. \footnote{The assumption of $f_0(x_0)=0$ is important, but can be avoided in some cases. For example, one could be interested in studying the differential operator
$$\frac{\partial}{\partial t}-f_0-\frac{1}{2}\sum_{i=1}^k (f_i^2+(\diver_{\mu} f_i)f_i)$$
that can be treated as an operator of the form \eqref{eq:operator}
$$\frac{\partial}{\partial t}-\tilde{f}_0-\frac{1}{2}\sum_{i=1}^k f_i^2$$
once we take $\tilde{f}_0=f_0+\frac{1}{2}\sum_{i=1}^k (\diver_{\mu} f_i) f_i$. The results of this paper apply also to an operator of this form and we will show the small time asymptotic of the fundamental solution on the diagonal at a point $x_0$, where $f_0(x_0)=0$ (not necessarily $\tilde{f}_0(x_0)=0$). Details for this kind of operator are written in Remark \ref{th:remark1}.
}

This problem has already been studied by many authors in the past and recent period. In particular, if the vector fields $f_1,\ldots,f_k$ are Lie bracket generating, i.e. they satisfy condition \eqref{eq:hormander} without the contribution of $f_0$, many results are already known. Indeed under this hypothesis it is well defined a distance function $d(x,y)$ determined by the fields $f_1,\ldots,f_k$. Then it was proved by L\'{e}andre in \cite{article:Lmaj} and  \cite{article:Lmin} that 
$$\lim_{t\rightarrow 0} t\log p(t,x,y)=-\frac{1}{2} d^2(x,y)$$
which generalizes a result by Varadhan \cite{art:varadhan} in the elliptic case. If moreover $f_0\equiv 0$, then if $x\neq y$ and $x$ and $y$ are not conjugate along any minimal geodesic, then for some constant $C=C(x,y)\in\bbR$
$$p(t,x,y)=\frac{C+O(t)}{t^{n/2}} e^{-\frac{d^2(x,y)}{2t}},$$
as proved by Ben Arous \cite{art:BenAroushors} and Barilari, Boscain and Neel \cite{article:BBN}. Here we see in particular that the order of the asymptotic in $t$ is determined by the dimension of the manifold. On the other hand if $x=y$ and $f_0\equiv 0$, Ben Arous and L\'{e}andre showed in \cite{article:BAL1} and \cite{article:BAL2} that
\begin{equation}
p(t,x,x)=\frac{C+O(\sqrt{t})}{t^{\calQ/2}}
\label{eq:BAL 1,2}
\end{equation}
where $\calQ$ is the homogeneous dimension of the manifold and $C$ is a constant depending on $x$. These two examples show that the order of degeneracy of the fundamental solution for small time can reveal interesting geometric properties and depends on the structure of the manifold.

In this paper we are going to determine the order of the asymptotic for small $t$ and we will see that if $f_0$ is not identically zero the asymptotic \eqref{eq:BAL 1,2} on the diagonal at a stationary point, $x_0$, of the drift field could be more general, as it was already pointed out by Ben Arous and L\'{e}andre in \cite{article:BAL2}. The proof relies on a homogeneity property of the operator in \eqref{eq:operator} under the dilations. Indeed to understand better the behaviour of the fundamental solution $p(t,x_0,x_0)$, we will dilate the space around $x_0$ and rescale the time. We will see that, under a suitable dilation, this procedure splits the operator $\frac{\partial}{\partial t}-f_0-\frac{1}{2}\sum_{i=1}^k f_i^2$ into a principal operator
\begin{equation}
\frac{\partial}{\partial t}-\hat{f}_0-\frac{1}{2}\sum_{i=1}^k \hat{f}_i^2
\label{eq:operator princ}
\end{equation}
perturbed by a small operator that goes to zero as the space becomes larger. Here the fields $\hat{f}_0,\hat{f}_1,\ldots,\hat{f}_k$ are nilpotent approximations of the original fields $f_0,f_1,\ldots,f_k$ around the point $x_0$. In this procedure it is very important to find a good coordinate system around $x_0$, so that the dilation of the space produces an hypoelliptic principal operator \eqref{eq:operator princ}. This is done by noting that the field $f_0$ acts in the operator just one time, while the fields $f_1,\ldots, f_k$ are applied twice. So we will give to the drift field a double weight with respect to the others. We explain this procedure in detail in Section \ref{sec:2}.

We will prove that the asymptotic of the fundamental solution depends on the controllability of some associated control problems: consider the control problem 
\begin{equation}
\dot{x}(t)=f_0(x(t))+\sum_{i=1}^k u_i(t) f_i(x(t))
\label{eq:intro1}
\end{equation}
where $u=(u_1,\ldots,u_k)\in L^\infty (\bbR^+;\bbR^k)$ are the controls. If the control problem \eqref{eq:intro1} is not controllable around the point $x_0$, i.e. from $x_0$ we can not reach a neighborhood of $x_0$ using curves described by the control problem, then 
$$p(t,x_0,x_0)=0 \qquad \forall t>0.$$
We will consider also the control problem induced by the approximating system 
\begin{equation}
\dot{x}(t)=\hat{f}_0(x(t))+\sum_{i=1}^k u_i(t) \hat{f}_i(x(t)).
\label{eq:intro2}
\end{equation}
If it is controllable, then so is also the original one \eqref{eq:intro1}, and, if we denote by $q_0(t,x,y)$ the fundamental solution of the approximating hypoelliptic operator in \eqref{eq:operator princ}, we prove in Theorem \ref{th:order} that
\begin{equation}
p(t,x_0,x_0)=\frac{q_0(1,x_0,x_0)+O(t)}{t^{N/2}},
\label{eq:order}
\end{equation}
where the factor $t^{N/2}$ comes from the change of coordinates given by the dilations and it is determined by the order of homogeneity under the dilations of a volume form like $dx_1\wedge\ldots\wedge dx_n$ around $x_0$.

The number $N$ is an integer depending on the structure of the Lie algebra generated by the fields $f_0,f_1,\ldots, f_k$ in $x_0$. It is computed in detail in Section \ref{sec:4}, but to anticipate here how it is found, we give a brief explanation of it: for $j\in\bbN$, let
$$d_j:=\dim\; \mathrm{span}_{x_0}\left\{[f_{i_1},[\ldots,[f_{i_{l-1}},f_{i_l}]\ldots]] \mbox{ such that } \#\{i_h>0\}+2\#\{i_h=0\}\leq j\right\}$$
be the dimension of the space generated by all the Lie brackets at $x_0$ of at most $j$ vector fields $f_0,f_1,\ldots, f_k$, where the field $f_0$ is counted two times. Then the number $N$ is defined as
$$N=\sum_{j\geq 1} j\cdot d_j.$$
For an operator coming from a sub-Rimemannian manifold with no drift field, $N$ is equal to the homegeneous dimension $\calQ$ of the manifold. If the drift field is not zero, then $N$ could be more general.

In the intermediate case in which the approximating control problem \eqref{eq:intro2} is not controllable in $x_0$, but the original control problem \eqref{eq:intro1} is still controllable, then the behaviour of the asymptotic can be more general. It could vanish, or blow up faster than $t^{-N/2}$ and even exponentially fast, as it was already pointed out by Ben Arous and L\'{e}andre in \cite{article:BAL2}.

The paper is organized as follows. We will begin by describing in details in Section \ref{sec:2} the homogeneity properties of the operator in \eqref{eq:operator}. In particular we will derive the conditions that the dilations have to satisfy in order to produce the right split of the dilated operator, into a hypoelliptic principal part, perturbed by a small operator. We will also give a brief introduction into Duhamel's formula in Subsection \ref{sec:duhamel}, which will be an important tool to study the perturbed operator. In Section \ref{sec:graded} we will introduce the coordinates that give the right dilations of the space. This coordinates are defined from a filtration of the tangent space to $x_0$ determined by the fields $f_0,f_1,\ldots, f_k$ and give rise to a graded structure around $x_0$, which defines an anisotropic dilation. In Section \ref{sec:4} we will define the nilpotent approximation, that determines the principal part of the vector fields $f_0,f_1,\ldots,f_k$, which define the principal operator \eqref{eq:operator princ}. We will compute also the integer $N$ appearing in the asymptotic \eqref{eq:order}, that derives from the change of the volume form under the dilations. In Section \ref{sec:5a} we will prove the asymptotic \eqref{eq:order}, by  using the tools introduced in the previous sections. In the following Section \ref{sec:5} we will focus our study on the operator derived from the nilpotent approximating system and its associated control problem. By proving a modification of Stroock and Varadhan's support theorem, we will give a necessary and sufficient condition for the positivity of the fundamental solution of the principal operator, that is based on the controllability of the approximating control system. Finally we will end the paper with Section \ref{sec:examples}, where we show a series of examples, illustrating how this formula recovers in particular the known results recalled in the introduction.


\section{The fundamental solution and its behavior under the action of a dilation}\label{sec:2}
Let $M$ be an orientable $n$-dimensional manifold (without boundary) and let $\mu$ be a volume form on $M$. Given $f_0,f_1,\ldots,f_k$ smooth vector fields on $M$ we consider the following partial differential operator:
\begin{equation}
\frac{\partial \varphi}{\partial t}-f_0(\varphi)-\frac{1}{2}\sum_{i=1}^k f_i^2(\varphi) \qquad \forall \varphi\in C^\infty(\bbR\times M).
\label{eq:partial de}
\end{equation}

We will call $f_0$ the \emph{drift} field and we will denote by $L$ the operator $f_0+\frac{1}{2}\sum_{i=1}^k f_i^2$. Let us recall the definition of fundamental solution.

\begin{definition}
The \emph{fundamental solution} of an operator $\frac{\partial}{\partial t}-\calL$ over $\bbR\times M$ with respect to the volume $\mu$ is a function $p(t,x,y)\in C^\infty(\bbR^+\times M\times M)$ such that
\begin{itemize}
	\item[$\bullet$] for every fixed $y\in M$, it holds $\frac{\partial }{\partial t}p(t,x,y)=\calL_x p(t,x,y)$, where the operator $\calL$ acts on the $x$ variable;
	\item[$\bullet$] for any $\varphi_0\in C^\infty_0(M)$, we have 
	$$\lim_{t\searrow 0}\int_M p(t,x,y)\varphi_0(y) \mu(y)=\varphi_0(x) \qquad \mbox{ uniformly in }t.$$
\end{itemize}
In other words, if we want to solve the partial differential equation $\frac{\partial \varphi}{\partial t}=\calL \varphi$ with initial condition $\varphi(0,x)=\varphi_0(x)$, the fundamental solution allows to reconstruct $\varphi$ by convolution of $\varphi_0$ with $p(t,x,y)$.
\end{definition}

\begin{remark}\label{th:volume}
\emph{The fundamental solution depends on the given volume $\mu$ in the following way: let $\mu$ and $\nu$ be two volume forms on $M$, and let $g$ be a smooth function such that $\nu= e^g \mu$. Let $p_\mu$ and $p_\nu$ denote the fundamental solutions of $\frac{\partial}{\partial t}-\calL$ with respect to $\mu$ and $\nu$ respectively. Then for every initial condition $\varphi_0\in C_0^{\infty}(M)$, the solution $\varphi(t,x)$ of
\begin{equation}
\left\{ 
	\begin{array}{l}
		\frac{\partial \varphi}{\partial t}=f_0(\varphi)+\frac{1}{2}\sum_{i=1}^k f_i^2(\varphi)\\
		\varphi(0,x)=\varphi_0(x)
	\end{array}
\right.
\label{eq:1}
\end{equation}
is given by
\begin{equation*}
\begin{split}
\varphi(t,x)=&\int_{M}p_{\mu}(t,x,y)\varphi_0(y) \mu(y)\\
=&\int_{M}p_{\nu}(t,x,y)\varphi_0(y) \nu(y)=\int_{M}p_{\nu}(t,x,y)\varphi_0(y) e^{g(y)}\mu(y),
\end{split}
\end{equation*}
where the equalities follow since the solution is unique for smooth vector fields. Since $\varphi_0$ is arbitrary, we have
$$p_\mu(t,x,y)=e^{g(y)}p_\nu(t,x,y) \qquad \forall t>0, \forall x,y\in M.$$
}

\emph{From the point of view of the asymptotic of the fundamental solution on the diagonal, it follows that the two asymptotics are the same for both volume forms up to a multiplicative constant $e^{g(x_0)}\neq 0$ depending on the relation between the two volume forms and on the point where we compute the asymptotic. For the study of the small time asymptotic on the diagonal, we will then suppose $\mu=dx_1\wedge\ldots\wedge dx_n$ near the point $x_0$.}
\end{remark}

The existence of a fundamental solution for an operator $\frac{\partial}{\partial t}-\calL$ is not always guaranteed, but in 1967 H\"{o}rmander published an important paper on the properties of an operator like the one in \eqref{eq:partial de}. An operator $\calL$ is called \emph{hypoelliptic} if whenever, for a function $\varphi$, $\calL\varphi\in C^\infty(U)$ on an open set $U$, then $\varphi\in C^\infty(U)$. In \cite{article:Hormander} H\"{o}rmander proved a condition of hypoellipticity for operators of the form \eqref{eq:partial de}, namely if
$$\Lie_x\{\frac{\partial}{\partial t}-f_0,f_1,\ldots,f_k\}=\bbR\times T_xM \qquad \forall x\in M,$$
then the operator $\frac{\partial}{\partial t}-L$ is hypoelliptic. Throughout this paper we will always assume that the fields $f_0,f_1,\ldots,f_k$ satisfy H\"{o}rmander condition.

Under this assumption of hypoellipticity, the operator in \eqref{eq:partial de} admits a fundamental solution, $p(t,x,y)$ (see the backward Kolmogorov equation and \cite{book:IkedaWatanabe}, Chapter 4.6 for the study on $\bbR^n$ and Chapter 5 for the equation on a manifold). It is given by the probability density function of the process $\xi_t$ that satisfies the following stochastic differential equation in Stratonovich form
\begin{equation}
\left\{
\begin{array}{l}
d\xi_t=f_0(\xi_t)dt+\sum_{i=1}^k f_i(\xi_t) \circ dw_i,\\
\xi_0=x
\end{array}
\right.
\label{eq:differential equation}
\end{equation}
where $\xi_t$ is a process on $M$ and $(w_1,\ldots,w_k)$ denotes a $k$-dimensional Brownian motion. More explicitly, the fundamental solution of \eqref{eq:partial de} with respect to a volume form $\mu$ on $M$ is the function $p(t,x,y)$ such that for every measurable set $A\subset M$, the probability of the process $\xi_t$ to be in $A$ at time $t$ is given by
$$P[\xi_t\in A|\xi_0=x]=\int_A p(t,x,y)\mu(y).$$
Moreover, for every initial condition $\varphi_0(x)$, the corresponding solution of the differential equation is
$$\varphi(t,x)=\int_M p(t,x,y)\varphi_0(y)\mu(y)=\mathbb{E}[\left.\varphi_0(\xi_t)\right| \xi_0=x],$$
that is the expectation value of $\varphi_0(\xi_t)$ at time $t$, knowing that $\xi_0=x$.

Let $x_0$ be a fixed point of the drift field, $f_0(x_0)=0$, our goal in this paper is to understand the small time behaviour of the fundamental solution at the point $x_0$. Our method will proceed as follows: we will dilate the space around the point $x_0$, and we will rescale the time accordingly, so that the space around $x_0$ will be magnified and we can study better the behaviour of the solution at $x_0$.

\begin{definition}\label{th:dilation1}
Let $(U,x)$ be any coordinate neighborhood of $x_0$ and let $(w_1,\ldots,w_n)$ be positive integers, that we will call \emph{weights} of the coordinates $(x_1,\ldots,x_n)$.

For $\epsilon>0$ we define the \emph{dilation}, $\delta_\epsilon$, of order $\epsilon$ and weights $(w_1,\ldots,w_n)$ around $x_0$ as the function $\delta_\epsilon:U\rightarrow U$, such that
\begin{equation}
\delta(x_1,\ldots,x_n):=(\epsilon^{w_1}x_1,\epsilon^{w_2}x_2, \ldots, \epsilon^{w_n}x_n) \quad\qquad \forall x=(x_1,\ldots,x_n)\in U.
\label{eq:dilation1}
\end{equation}
For $\epsilon>1$ we understand the dilation defined actually on a smaller neighborhood of $x_0$ in $U$.
\end{definition}

Under the action of the dilations $\delta_\epsilon$, the coordinate functions and the coordinate vector fields behave as
\begin{equation}
x_i\circ \delta_\epsilon=\epsilon^{w_i} x_i, \qquad\qquad \delta_{\frac{1}{\epsilon}*}\frac{\partial}{\partial x_i}=\frac{1}{\epsilon^{w_i}}\frac{\partial}{\partial x_i}\qquad\qquad \forall 1\leq i\leq n.
\label{eq:b1}
\end{equation}
Let the volume $\mu$ be represented in the coordinate neighborhood $(U,x)$ by $\mu=dx_1\wedge\ldots\wedge dx_n$. By Remark \ref{th:volume} this assumption is not restrictive for the study of the asymptotic along the diagonal. Then the volume form $\mu$ changes under the action of the dilation $\delta_\epsilon$ as
\begin{equation}
\delta_{\epsilon}^*(dx_1\wedge\ldots\wedge dx_n)=\epsilon^{\sum_{i=1}^n w_i}dx_1\wedge\ldots\wedge dx_n.
\label{eq:volume transform}
\end{equation}

When we apply a dilation to the space around $x_0$ and we rescale the time variable, also the fundamental solution is changed accordingly, as it is proved in the following proposition.

\begin{proposition}\label{th:diffeo}
Let $(U,x)$ be a coordinate neighborhood around the point $x_0$ and let $\mu$ be a volume form on $M$ such that $\mu=dx_1\wedge\ldots\wedge dx_n$ in $U$. For weights $(w_1,\ldots,w_n)$ and $0<\epsilon<1$ consider the dilation $\delta_\epsilon:U\longrightarrow U$. Let $p(t,x,y)$ be the fundamental solution of the operator in \eqref{eq:partial de} with respect to the volume $\mu$. Then the fundamental solution on $U$ of the operator
\begin{equation}
\frac{\partial }{\partial t}-\epsilon^a\left(\delta_{1/\epsilon*}f_0+\frac{1}{2}\sum_{i=1}^k \left(\delta_{1/\epsilon*}f_i\right)^2\right)
\label{eq:modified}
\end{equation}
is the function
\begin{equation}
q_\epsilon(t,x,y):=\epsilon^{\sum_{i=1}^n w_i}\; p(\epsilon^a t, \delta_\epsilon(x),\delta_\epsilon(y)) \qquad \forall x,y\in U.
\end{equation}
Here $\delta_{1/\epsilon*}X$ denotes the pushforward of a vector field $X$ under the action of $\delta_\epsilon$ and $a$ is a real positive number.
\end{proposition}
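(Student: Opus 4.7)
The plan is to verify directly that $q_\epsilon$ satisfies the two defining properties of a fundamental solution for the operator in \eqref{eq:modified} (with respect to the volume $dx_1\wedge\cdots\wedge dx_n$ on $U$): the PDE in the $x$-variable, and the initial-condition property against test functions $\varphi_0\in C^\infty_0(U)$.

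The PDE check rests on the algebraic identity $(F_*X)(h)\circ F=X(h\circ F)$, valid for any diffeomorphism $F$, vector field $X$ and smooth function $h$, together with its iterate $(F_*X)^2=F_*(X^2)$. Taking $F=\delta_{1/\epsilon}$ and
$$h(x):=p(\epsilon^a t,\delta_\epsilon(x),\delta_\epsilon(y)),$$
one has $h\circ\delta_{1/\epsilon}=p(\epsilon^a t,\cdot,\delta_\epsilon(y))$, so for $\alpha=1,2$,
$$(\delta_{1/\epsilon*}f_i)^\alpha h\,(x)=f_i^\alpha\bigl(p(\epsilon^a t,\cdot,\delta_\epsilon(y))\bigr)(\delta_\epsilon(x)).$$
Summing over $i$, multiplying by $\epsilon^a$, and using the PDE satisfied by $p$ at the point $(\epsilon^a t,\delta_\epsilon(x),\delta_\epsilon(y))$ reduces the right-hand side to $\epsilon^a\,\partial_s p(\epsilon^a t,\delta_\epsilon(x),\delta_\epsilon(y))$; after multiplication by $\epsilon^{\sum_i w_i}$, this is exactly $\partial_t q_\epsilon(t,x,y)$ by the chain rule.

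For the initial condition I change variables $z=\delta_\epsilon(y)$ and invoke \eqref{eq:volume transform} in the form $dy_1\wedge\cdots\wedge dy_n=\epsilon^{-\sum_i w_i}\,dz_1\wedge\cdots\wedge dz_n$. The prefactor $\epsilon^{\sum_i w_i}$ built into $q_\epsilon$ cancels this Jacobian, yielding
$$\int_U q_\epsilon(t,x,y)\,\varphi_0(y)\,dy=\int_{\delta_\epsilon(U)} p(\epsilon^a t,\delta_\epsilon(x),z)\,(\varphi_0\circ\delta_{1/\epsilon})(z)\,dz.$$
Since $\varphi_0\circ\delta_{1/\epsilon}\in C^\infty_0(\delta_\epsilon(U))$ and $\epsilon^a t\to 0^+$ as $t\to 0^+$, the fundamental-solution property of $p$ at $\delta_\epsilon(x)$ gives the limit $(\varphi_0\circ\delta_{1/\epsilon})(\delta_\epsilon(x))=\varphi_0(x)$, as required.

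No genuine analytic difficulty arises: the entire argument is a formal rescaling computation based on the chain rule, change of variables, and naturality of pushforwards. The only real point to watch is the bookkeeping of three separate exponents in $\epsilon$: the factor $\epsilon^a$ multiplying the rescaled spatial operator (forced by the time change $t\mapsto\epsilon^a t$), the factor $\epsilon^{w_i}$ appearing in each pushforward $\delta_{1/\epsilon*}\partial/\partial x_i$, and the overall normalizing factor $\epsilon^{\sum_i w_i}$ in the definition of $q_\epsilon$ (forced by the Jacobian identity \eqref{eq:volume transform}).
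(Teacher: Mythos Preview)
Your proof is correct and follows essentially the same approach as the paper's own proof: both verify the two defining properties of a fundamental solution directly, using the naturality identity $(F_*X)(h)\circ F=X(h\circ F)$ for the PDE check and the change of variables $z=\delta_\epsilon(y)$ together with \eqref{eq:volume transform} for the initial condition. The only cosmetic difference is that the paper bundles the time and space rescaling into a single diffeomorphism $\psi(t,x)=(\epsilon^a t,\delta_\epsilon(x))$ on $\bbR^+\times U$ and writes the modified operator as $\psi^{-1}_*(\partial_t - L)$, whereas you keep the time derivative separate and handle it via the chain rule; the computations are otherwise identical.
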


\begin{remark}
\emph{The coefficient of normalization $\epsilon^{\sum_{i=1}^n w_i}$, that we have used to define $q_\epsilon$, is necessary in order to reconstruct all the solutions of the differential operator, by convolution with $q_\epsilon$. This coefficient appears as soon as we make a change of coordinates in the integral of the convolution. Moreover, we will see that from this coefficient we find the order of the asymptotic of the fundamental solution for small time.}
\end{remark}

\begin{proof}
First of all notice that the dilation $\delta_{1/\epsilon}:\delta_\epsilon(U)\rightarrow U$ can be defined only on the smaller neighborhood $\delta_\epsilon(U)$ of $U$. Then the fields $\delta_{1/\epsilon*}f_i$ are vector fields just on the coordinate neighborhood $U$.

Next let us prove the first property of the fundamental solution, i.e. that the function $q_\epsilon$ is a solution of the operator in \eqref{eq:modified}. For convenience, we call $\psi$ the dilation from $\bbR^+\times \delta_{1/\epsilon}U$ to $\bbR^+\times U$ defined by $\psi(t,x):=(\epsilon^a t, \delta_\epsilon(x))$. Then the function $q_\epsilon$ can be written as $q_\epsilon(t,x,y)=p(\psi(t,x),\delta_\epsilon(y))$ and the operator in \eqref{eq:modified} is
$$\psi^{-1}_*\frac{\partial }{\partial t}-\psi^{-1}_*f_0-\frac{1}{2}\sum_{i=1}^k \left(\psi^{-1}_*f_i\right)^2,$$
where we are using a little abuse of notation, by considering $\frac{\partial}{\partial t}, f_0,\ldots,f_k$ as vector fields defined on the product space $\bbR^+\times U$. Recall the definition of pushforward of a vector field $X$ under the action of a diffeomorphism $\varphi$: for every function $g$ we have $\left.\varphi_{x*}(X)(f)\right|_{\varphi(x)}=X_x(f\circ\left.\varphi\right|_x)$. Then we compute for fixed $y\in U$
\begin{eqnarray*}
\left(\psi^{-1}_*\frac{\partial }{\partial t}\right)q_\epsilon(t,x,y)&=&\epsilon^{\sum_{i=1}^n w_i}\;\left.\left(\psi^{-1}_*\frac{\partial }{\partial t}\right)\right|_{(t,x,y)}(p(\psi(t,x),\delta_{\epsilon}y))\\
&=&\epsilon^{\sum_{i=1}^n w_i}\;\psi_* \left(\left.\left(\psi^{-1}_*\frac{\partial }{\partial t}\right)\right|_{(t,x,y)}\right)\left.p\right|_{(\psi(t,x),\delta_{\epsilon}y)}\\
&=&\epsilon^{\sum_{i=1}^n w_i}\;\left.\frac{\partial }{\partial t}\right|_{(\psi(t,x),\delta_{\epsilon}y)}\left.p\right|_{(\psi(t,x),\delta_{\epsilon}y)}\\
&=&\epsilon^{\sum_{i=1}^n w_i}\; \left. L\right|_{(\psi(t,x),y)}\;\left.p\right|_{(\psi(t,x),\delta_{\epsilon}y)}
\end{eqnarray*} 
where the last equality follows since $p$ is the fundamental solution of the operator $\frac{\partial}{\partial t}-L$. Applying the same computations to the vector fields $\psi^{-1}_* f_i$ for $i=0,\ldots,k$, we find that $q_\epsilon$ satisfies
$$\psi^{-1}_*\frac{\partial }{\partial t}q_\epsilon(t,x,y)=\psi^{-1}_*f_0(q_\epsilon(t,x,y))+\frac{1}{2}\sum_{i=1}^k \left(\psi^{-1}_*f_i\right)^2(q_\epsilon(t,x,y))$$
and hence $q_\epsilon$ is a solution for the operator in \eqref{eq:modified}.

Let us prove the second property of a fundamental solution. Here we will see that the constant of normalization $\epsilon^{\sum_{i=1}^n w_i}$ in the definition of $q_\epsilon$ is exactly the parameter that we need in order to construct the other solutions of the partial differential equation by convolution with the fundamental solution. Indeed let us prove that for any $\varphi_0\in C^\infty_0(U)$, it holds
$$\lim_{t\searrow 0}\int_{U}q_\epsilon(t,x,y)\varphi_0(y)\mu(y)=\varphi_0(x).$$
This follows by a change of variable and the same property valid for the fundamental solution $p(t,x,y)$:
\begin{eqnarray*}
\lim_{t\searrow 0}\int_{U}q_\epsilon(t,x,y)\varphi_0(y)\mu(y)&=&\lim_{t\searrow 0}\int_{U}\epsilon^{\sum_{i=1}^n w_i} p(\epsilon^a t, \delta_\epsilon x,\delta_\epsilon y)\varphi_0(y)\mu(y)\\
&=&\lim_{t\searrow 0}\int_{M}\epsilon^{\sum_{i=1}^n w_i} p(\epsilon^a t, \delta_\epsilon x,\delta_\epsilon y)\varphi_0(y)\mu(y)
\end{eqnarray*}
We can integrate on $M$, because, since $\varphi_0$ has compact support in $U$, we can extend $\varphi_0$ to be zero outside $U$. Now let us do a change of variable with $z=\delta_\epsilon y$. As computed in \eqref{eq:volume transform}, the volume is transformed as $\mu(z)=\epsilon^{\sum_{i=1}^n w_i}\mu(y)$. Then
\begin{eqnarray*}
\lim_{t\searrow 0}\int_{U}q_\epsilon(t,x,y)\varphi_0(y)\mu(y)&=&\lim_{t\searrow 0}\int_{M}\epsilon^{\sum_{i=1}^n w_i} p(\epsilon^a t, \delta_\epsilon x,z)(\varphi_0\circ \delta_{1/\epsilon})(z)\frac{\mu(z)}{\epsilon^{\sum_{i=1}^n w_i}}\\
&=&(\varphi_0\circ \delta_{1/\epsilon})(\delta_\epsilon x)=\varphi_0(x)
\end{eqnarray*}
where the second equality follows because $p$ is a fundamental solution.
\end{proof}

Let us investigate better the behaviour of the fields $f_i$ under the action of the dilations. Let us write every component $f^{(j)}_i$ of the fields $f_i$ in a Taylor expansion centered in $x_0=0$ for $x$ in the coordinate neighborhood $U$
$$f_i^{(j)}(x)\frac{\partial}{\partial x_j}=f_i^{(j)}(0)\frac{\partial}{\partial x_j}+\sum_{l=1}^n \frac{\partial f_i^{(j)}(0)}{\partial x_l} x_l \frac{\partial}{\partial x_j}+o(x). $$
By the properties of a dilation acting on the coordinate functions and on the coordinate vector fields, \eqref{eq:b1}, when we apply a dilation to the vector fields $f_i$, every component has a different degree with respect to $\epsilon$. Depending on the value of the weights $(w_1,\ldots,w_n)$ and on the coefficients of the Taylor expansion of the fields $f_i$, for every $i=0,\ldots, k$, there exist an integer $\alpha_i$ and a principal vector field $\hat{f}_i$ such that
$$\delta_{\frac{1}{\epsilon}*}f_i=\frac{1}{\epsilon^{\alpha_i}}\hat{f}_i+o\left(\frac{1}{\epsilon^{\alpha_i}}\right),$$ 
where $\hat{f}_i$ contains the components of every $f_i^{(j)}\frac{\partial}{\partial x_j}$ that is homogeneous of degree $-\alpha_i$ with respect to the dilations. Applying this formula to the dilated operator in \eqref{eq:modified}, we find that the operator $L$ rescales as 
$$\delta_{1/\epsilon*}f_0+\frac{1}{2}\sum_{i=1}^k \left(\delta_{1/\epsilon*}f_i\right)^2=\frac{1}{\epsilon^{\alpha_0}}\hat{f}_0+\frac{1}{2}\sum_{i=1}^k\frac{1}{\epsilon^{2\alpha_i}}\hat{f}_i^2+o\left(\frac{1}{\epsilon^{\alpha}}\right)$$
where $\alpha:=\max\{\alpha_0,2\alpha_1,\ldots,2\alpha_k\}$.

The main task in our study will be to find suitable coordinates and good weights $w_i$, so that all the principal parts of the vector field $f_0$ and of $f_1^2,\ldots,f_k^2$ rescale with the same degree under the dilations. In this way the principal part in the dilated operator is homogeneous of order $-\alpha$ and if we choose $a=\alpha$, the operator $L$ in \eqref{eq:modified} can be written as
\begin{equation}
\epsilon^\alpha\left(\delta_{1/\epsilon*}f_0+\frac{1}{2}\sum_{i=1}^k \left(\delta_{1/\epsilon*}f_i\right)^2\right)=\hat{f}_0+\frac{1}{2}\sum_{i=1}^k\hat{f}_i^2+\calX_\epsilon.
\label{eq:a1}
\end{equation}
where $\calX_\epsilon$ is a differential operator that goes to $0$ as $\epsilon$ goes to zero. As proved in Proposition \ref{th:diffeo}, its fundamental solution on $U$ is given by 
\begin{equation}
q_\epsilon(t,x,y):=\epsilon^{\sum_{i=1}^n w_i} p(\epsilon^\alpha t,\delta_\epsilon x, \delta_\epsilon y).
\label{eq:qepsilon}
\end{equation}

To an operator like the one in \eqref{eq:a1} we can apply Duhamel's formula, that gives the asymptotic of the fundamental solution as a perturbation of the asymptotic of the fundamental solution of the principal operator.

\subsection{Duhamel's formula}\label{sec:duhamel}
In this section we recall briefly a famous formula, called Duhamel's formula, which allows to find the asymptotic of the fundamental solution of a perturbed operator, once we have the explicit fundamental solution of the principal part of the operator. This method is presented, among others, in Chapter 3 of \cite{book:RosenbergLaplacian} and in \cite{article:Barilari}.

Let $\calL$ be an operator on a Hilbert space with fundamental solution $p(t,x,y)$ and let us define the following operator on the Hilbert space
\begin{equation*}
e^{t\calL}\varphi(x)=\int p(t,x,y)\varphi(y)dy.
\end{equation*} 
By the properties of the fundamental solution this is an heat operator  $e^{t\calL}$, i.e. an operator such that
\begin{equation*}
\frac{\partial e^{t\calL}\varphi}{\partial t}=\calL e^{t\calL}\varphi \quad \mbox{ and } \quad \lim_{t\rightarrow 0}e^{t\calL}\varphi=\varphi,
\end{equation*}

Suppose that $\calL$ can be decomposed in a sum, 
\begin{equation*}
\calL=\calL_0+\calX,
\end{equation*} 
of a principal part, $\calL_0$, and a perturbation, $\calX$, and assume that $\calL_0$ has a well defined heat operator $e^{t\calL_0}$. Then Duhamel's formula allows to reconstruct the heat operator of $\calL$ by a perturbation of the heat operator of $\calL_0$ (see Chapter 3 of \cite{book:RosenbergLaplacian} for a proof), namely
\begin{equation}
e^{t\calL}=e^{t\calL_0}+\int_0^t e^{(t-s)\calL}\calX e^{s\calL_0}ds=e^{t\calL_0}+e^{t\calL}*\calX e^{t\calL_0}
\label{eq:duhamel2}
\end{equation}
where with $*$ we denote the convolution operator between two operators, $A(t)$ and $B(t)$, on the Hilbert space:
$$(A*B)(t)=\int_0^tA(t-s)B(s)ds.$$ 
Let $a(t,x,y)$ and $b(t,x,y)$ be the heat kernels of $A(t)$ and $B(t)$ respectively and let $X$ be an operator. Then the kernel of $(A*XB)(t)$ is obtained as follows: let $\varphi$ be a function in the Hilbert space, then
\begin{equation*}
\begin{split}
\left[(A*XB)(t)\varphi\right](x)&=\left[\int_0^tA(t-s)XB(s)ds\;\varphi\right](x)\\
&=\left[\int_0^tA(t-s)\left[X\int_M b(s,\cdot,y)\varphi(y)dy\right]ds\right](x)\\
&=\int_0^t\int_M a(t-s,x,z)X_z\left(\int_M b(s,z,y)\varphi(y)dy\right)dzds\\
&=\int_M\left(\int_0^t\int_M a(t-s,x,z)X_zb(s,z,y))dzds\right)\varphi(y)dy
\end{split}
\end{equation*}
so the heat kernel of $(A*XB)(t)$ is
\begin{equation*}
(a*Xb)(t,x,y)=\int_0^t\int_M a(s,x,z)X_zb(t-s,z,y)dzds.
\end{equation*}
From \eqref{eq:duhamel2} we can now derive an approximation of the heat kernel $p(t,x,y)$ of the perturbed operator $\calL$, by means of the heat kernel $p_0(t,x,y)$ of the principal operator:
\begin{equation}
p(t,x,y)=p_0(t,x,y)+(p*\calX p_0)(t,x,y).
\label{eq:duhamel asymptotic}
\end{equation}
We will make use of this formula in the next section to find the asymptotic expansion of the fundamental solution of \eqref{eq:operator}.

\subsection{The perturbative method}
We can apply Duhamel's formula to the operator in \eqref{eq:a1}. Indeed it is the sum of a principal operator
$$L_0=\hat{f}_0+\frac{1}{2}\sum_{i=1}^k\hat{f}_i^2$$
perturbed by a small operator $\calX_\epsilon$. If we find good coordinates and weights, so that $\frac{\partial}{\partial t}-L_0$ admits a fundamental solution $q_0(t,x,y)$, then by Duhamel's formula \eqref{eq:duhamel asymptotic} the asymptotic of the fundamental solution $q_\epsilon$ is
\begin{equation}
q_\epsilon(t,x,y)=q_0(t,x,y)+(q_\epsilon*\calX_\epsilon q_0)(t,x,y).
\label{eq:qepsilon1}
\end{equation}
From here we will immediately conclude our study, indeed let us choose in the definition of $q_\epsilon$, \eqref{eq:qepsilon}, $x=x_0$, let $\epsilon$ go to zero as $\epsilon=t^{1/\alpha}$ and fix the time variable $t=1$, then we get
$$p(t,x_0,x_0)=\frac{q_\epsilon(1,x_0,x_0)}{\epsilon^{\sum_{i=1}^n w_i}}=\frac{1}{\epsilon^{\sum_{i=1}^n w_i}}\left(q_0(1,x_0,x_0)+(q_\epsilon*\calX_\epsilon q_0)(1,x_0,x_0)\right).$$
In conclusion, in the choice of the coordinates $(U,x)$ and the weights $(w_1,\ldots,w_n)$ it will be important that the principal parts, $\hat{f}_0,\hat{f}_1,\ldots,\hat{f}_k$, of the vector fields make homogeneous the principal part of the dilated operator and, moreover, that they satisfy H\"{o}rmander condition, so that it is guaranteed the existence of a fundamental solution $q_0$ of the principal operator $L_0$. Finally we will need to check that the remainder term $(q_\epsilon*\calX_\epsilon q_0)(1,x_0,x_0)$ in the asymptotic of $p$ goes to zero, as $t$ (and therefore $\epsilon$) goes to zero.


\section{Graded structure induced by a filtration}\label{sec:graded}
In this section we will introduce some notation and recall the definition of local graded structure of a manifold, induced by a filtration. This terminology will be essential to find the right coordinate to rescale the differential operator $L$ and to compute the order of the asymptotic of the fundamental solution. We will constantly refer to Bianchini and Stefani's paper \cite{art:BianchiniStefani}.

\subsection{Chart adapted to a filtration}
Let $M$ be a $n$ dimensional smooth manifold and $f_0,f_1,\ldots,f_k$ be smooth vector fields on $M$, that satisfy H\"{o}rmander condition \eqref{eq:hormander}. We will consider the hypoelliptic operator on $\bbR\times M$ defined as
$$\frac{\partial}{\partial t}-f_0-\sum_{i=1}^kf_i^2.$$
As noticed in the previous section the role played by the drift field $f_0$ and the other vector fields, $\{f_1,\ldots,f_k\}$, in the sum of squares, is different, and in particular the fields $\{f_1,\ldots,f_k\}$ are applied twice as many times as the drift field is. For this reason we want to treat differently the two kinds of fields, by giving to them two different \emph{weights}. 

Let $\Lie X$ be the Lie algebra on $\bbR$ generated by a set $\{X_0,X_1,\ldots,X_k\}$ of noncommutative indeterminates. 

\begin{definition}
For every bracket $\Lambda$ in $\Lie X$  we denote by $|\Lambda|_i$ the number of times that the indeterminate $X_i$ appears in the definition of $\Lambda$. We will call this number the \emph{length} of $\Lambda$ with respect to $X_i$. 
\end{definition}
For example, the bracket $\Lambda=[X_0,[X_2,X_0]]$ has lengths $|\Lambda|_0=2$, $|\Lambda|_1=0$ and $|\Lambda|_2=1$, and it has zero length with respect to any other indeterminate. 

By fixing a weight, $l_i$, to every indeterminate $X_0,\ldots,X_k$ be can define the weight of a bracket $\Lambda$.

\begin{definition}\label{th:weight}
Given a set of integers $(l_0,l_1,\ldots,l_k)$, we define the \emph{weight} of a bracket $\Lambda\in\Lie X$ as
$$||\Lambda||:=\sum_{i=0}^k l_i |\Lambda|_i \qquad \mbox{ if } \Lambda\neq 0$$
and we set $||0||=0$.
\end{definition}

In order to give different importance to the drift field, with respect to the other vector fields, in the following we will fix the integers to be 
\begin{equation}
l_0=2 \quad \mbox{ and } \quad l_1=\ldots=l_k=1.
\label{eq:weights}
\end{equation}
This means that the indeterminate $X_0$ will have weight $2$, while the other indeterminates will have weight $1$. For more complex Lie brackets, we have for example that the weight of the bracket considered before $\Lambda=[X_0,[X_2,X_0]]$ is 
$$||\Lambda||=2\cdot |\Lambda|_0+1\cdot |\Lambda|_2=5.$$

By means of the weight of the indeterminates we introduce now a filtration of the Lie algebra spanned by $f_0,f_1,\ldots,f_k$ in the following way. For every bracket $\Lambda$ in $\Lie X$ we denote by $\Lambda_f$ the vector field on $M$ obtained by replacing every indeterminate $X_i$ with the corresponding field $f_i$ for $0\leq i\leq k$. Then we define an increasing filtration $\mathcal{L}=\{L_i\}_{i\geq 0}$ of $\mathrm{Vec}(M)$ by
\begin{equation}
L_i=\mathrm{span}\{\Lambda_f:\Lambda\in \mathrm{Lie} X, ||\Lambda||\leq i\}.
\label{eq:filtration}
\end{equation}
In other words $L_i$ is the subalgebra of $\mathrm{Vec}(M)$ that contains all the vector fields obtained from a bracket of weight less then or equal to $i$. In particular, following our choice of weights, the first subspaces of the filtration are
\begin{eqnarray*}
L_0&=&\{0\}\\
L_1&=&\mathrm{span}\{f_1,\ldots,f_k\}\\
L_2&=&\mathrm{span}\{f_i,[f_i,f_j],f_0 : i,j=1,\ldots, k \}\\
L_3&=&\mathrm{span}\{f_i,[f_i,f_j],f_0,[f_i,[f_j,f_h]],[f_0,f_i] : i,j,h=1,\ldots, k \}\\
&\vdots &
\end{eqnarray*}
Notice moreover that for every $i,j\geq 0$, the following properties hold
\begin{itemize}
	\item[$\bullet$] $L_i\subset L_{i+1}$
	\item[$\bullet$] $[L_i,L_j]\subset L_{i+j}$
	\item[$\bullet$] $\bigcup_{i\geq 0} L_i=\Lie\{f_0,f_1,\ldots,f_k\}$ and $\bigcup_{i\geq 0} L_i(x)=T_xM$, for every $x\in M$, since by assumption the family $\{f_0,\ldots,f_k\}$ satisfies the weak H\"{o}rmander condition \eqref{eq:hormander}.
\end{itemize}
When we evaluate $\mathcal{L}$ at the stationary point $x_0$ we get a stratification of the tangent space $T_{x_0}M$ at $x_0$ and this stratification will induce a very peculiar choice of coordinates around $x_0$. 

Let 
$$d_i:=\dim L_i(x_0) \qquad\quad \forall i\geq 0.$$ In particular, $d_0=0$ and $d_1\leq k$. Moreover, by H\"{o}rmander condition \eqref{eq:hormander}, there exists a smallest integer $m$ such that $L_m(x_0)=T_{x_0}M$. We will call this number the \emph{step} of the filtration $\mathcal{L}$ at $x_0$. 

The filtration $\calL$ induces a particular choice of coordinates centered at $x_0$, as proved by the following proposition.

\begin{proposition}[Bianchini, Stefani \cite{art:BianchiniStefani}]\label{th:adapted chart}
There exists a chart $(U,x)$ centered at $x_0$ such that for every $1\leq j\leq m$
\begin{itemize}
	\item[(i)] $L_j(x_0)=\mathrm{span}\{\frac{\partial}{\partial x_1},\ldots,\frac{\partial}{\partial x_{d_j}}\}$
	\item[(ii)] $D x_h(x_0)=0$ for every differential operator $D\in\calA^j:=\{ Z_1\cdots Z_l$ with $Z_s\in L_{i_s}$ and $i_1+\cdots+i_l\leq j\}$ and for every $h>d_j$.
\end{itemize}
\end{proposition}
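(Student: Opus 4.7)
The plan is to produce a coordinate chart whose axes are aligned at $x_0$ with the flag $0\subset L_1(x_0)\subset\ldots\subset L_m(x_0)=T_{x_0}M$ and whose coordinate functions have ``enough weighted vanishing'' at $x_0$ to enforce (ii). Assign to each index $h\in\{1,\ldots,n\}$ the weight $w(h)=j$ with $d_{j-1}<h\leq d_j$: then condition (ii) is equivalent to demanding that the coordinate function $x_h$ vanish at $x_0$ to weighted order at least $w(h)$, in the sense that $Dx_h(x_0)=0$ whenever $D$ is a product of fields in $\calL$ with weights summing to strictly less than $w(h)$.

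First I would choose, by induction on $j=1,\ldots,m$, Lie brackets $\Lambda^{(h)}\in\Lie X$ of weight exactly $j$ for $d_{j-1}<h\leq d_j$, such that $\{\Lambda^{(h)}_f(x_0):1\leq h\leq d_j\}$ is a basis of $L_j(x_0)$; this is possible by the very definition of the filtration. Setting $Y_h:=\Lambda^{(h)}_f$ gives smooth vector fields on $M$ with $Y_h\in L_{w(h)}$ and $\{Y_1(x_0),\ldots,Y_{d_j}(x_0)\}$ a basis of $L_j(x_0)$ for every $j$. I would then define the candidate chart by exponential coordinates of the second kind,
$$\Phi(x_1,\ldots,x_n):=\exp(x_nY_n)\circ\cdots\circ\exp(x_1Y_1)(x_0),$$
which is a local diffeomorphism near the origin since $d\Phi(0)(\partial/\partial x_h)=Y_h(x_0)$ and the $Y_h(x_0)$ form a basis of $T_{x_0}M$. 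Property (i) follows at once, because $\partial/\partial x_h|_{x_0}=Y_h(x_0)\in L_{w(h)}(x_0)$ and the $Y_h$ are flag-adapted.

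The heart of the proof is (ii). The key intermediate claim I would establish is that in these coordinates any vector field $Z\in L_i$ takes the form $\sum_h a_h(x)\partial/\partial x_h$ with each coefficient $a_h$ of weighted order at least $w(h)-i$ at the origin, where a monomial $x_1^{\alpha_1}\cdots x_n^{\alpha_n}$ is assigned weighted order $\sum_s\alpha_sw(s)$. Granted this, any differential operator $D=Z_1\cdots Z_l\in\calA^j$ with $\sum_si_s=j$ becomes a weighted operator of order at most $j$, in the sense that $Df$ has weighted order at the origin at least $\mathrm{ord}(f)-j$; applied to the linear function $f=x_h$ (of weighted order exactly $w(h)$), this yields $Dx_h(x_0)=0$ whenever $j<w(h)$, which is precisely (ii). The main obstacle is the coefficient estimate on $Z\in L_i$: I would prove it by induction on the length of the bracket expression defining $Z$, using the Baker--Campbell--Hausdorff formula applied to the flows $\exp(x_iY_i)$ that compose $\Phi$, together with the structural constraint $[L_p,L_q]\subset L_{p+q}$ so that iterated brackets of the $Y_i$'s stay inside the filtration at the correct level. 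The clean combinatorial version of this step is worked out in \cite{art:BianchiniStefani}, to which I would refer for the full verification.
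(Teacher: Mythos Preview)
Your chart is the same one the paper uses---exponential coordinates of the second kind built from flag-adapted fields $Y_h\in L_{w(h)}$---and your argument for (i) is identical. The routes diverge at (ii). The paper does \emph{not} go through your ``key intermediate claim'' that every $Z\in L_i$ has $h$-th coefficient of weighted order at least $w(h)-i$; that statement is precisely $\calO(Z)\le i$, which the paper later quotes as a separate result of Bianchini--Stefani (their Theorem~3.1, cited in Section~\ref{sec:4}) and which in the usual logical order is proved \emph{after} an adapted chart is already available. Instead the paper isolates a short pointwise lemma (Lemma~\ref{th:adapted chart lemma}): given any smooth $\varphi$ with $Z\varphi(x_0)=0$ for all $Z\in L_j$, one can replace $\varphi$ by the last coordinate $y_n$ of a suitable second-kind exponential chart so that $D\hat\varphi(x_0)=0$ for every $D\in\calA^j$. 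The proof is an elementary commutation at $x_0$: expand the outermost factor of $D$ in the basis $Y_i(x_0)$, push the resulting $Y_i$ inward using $[L_p,L_q]\subset L_{p+q}$ and induction on the number of factors, and reduce to iterated derivatives $Y_{i_l}\cdots Y_{i_1}\!\cdot y_n(x_0)$ with all indices $\le d_j<n$, which vanish by the structure of the chart. The adapted chart is then assembled by applying this lemma coordinate by coordinate, starting from any chart satisfying (i).

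So your approach is correct, but it front-loads the heavier BCH/weighted-order computation that the paper defers; the paper's commutation lemma is lighter, self-contained, and does not require proving the graded-order bound for vector fields as a prerequisite. What your route buys is that the estimate $\calO(Z)\le i$ comes out simultaneously, whereas the paper has to invoke it separately later.
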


\begin{definition}
We will call a chart that satisfies the properties of Proposition \ref{th:adapted chart} an \emph{adapted chart to the filtration $\mathcal{L}$ at $x_0$}.
\end{definition}

Since this kind of coordinates will reveal to be very important in our study, we give here the proof of the proposition, which relies upon the following Lemma:
\begin{lemma}\label{th:adapted chart lemma}
Let $m$ be the step of the filtration $\mathcal{L}$ at $x_0$ and let $j<m$ be an integer. Let $\varphi\in C^\infty(M)$ be such that $d_{x_0}\varphi\neq 0$ and $Z\cdot \varphi(x_0)=0$ for all $Z\in L_{j}$. Then there exists an open neighborhood $U$ of $x_0$ and a function $\hat{\varphi}\in C^\infty(U)$ such that 
$$d_{x_0}\varphi=d_{x_0}\hat{\varphi} \qquad \mbox{and}\qquad D\cdot \hat{\varphi}(x_0)=0 \quad \forall D\in\calA^j=\{ Z_1\cdots Z_l : \; Z_s\in L_{i_s},i_1+\cdots+i_l\leq j\}.$$
\end{lemma}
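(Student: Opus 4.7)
My plan is to construct $\hat\varphi$ as $\varphi + P_2 + P_3 + \cdots + P_j$ in some smooth chart $(U,y)$ centered at $x_0$, where $P_l$ is a homogeneous polynomial of degree $l$ in the coordinates $y_i$. The condition $d_{x_0}\hat\varphi = d_{x_0}\varphi$ is then automatic since the added terms have no constant or linear part. The key structural observation I would repeatedly use is that for $D = Z_1 \cdots Z_r \in \calA^j$ and a homogeneous polynomial $P_l$ of degree $l$, one has \emph{(a)} $DP_l(x_0) = 0$ whenever $r < l$, because every term obtained by expanding $D$ in coordinates reduces at $x_0$ to a positive-degree polynomial evaluated at $0$; and \emph{(b)} for $r = l$, only the principal symbol of $D$ contributes and the value depends only on $(Z_1(x_0), \ldots, Z_l(x_0))$ in a manifestly symmetric multilinear way. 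Consequently the polynomials $P_l$ can be added one by one in order of increasing $l$ without interfering with the conditions already secured at lower degrees.

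I would then proceed by induction on $l$ from $1$ up to $j$, maintaining the invariant that $D\hat\varphi_l(x_0) = 0$ for every $D \in \calA^j$ of order $\leq l$. The base case $l=1$ is exactly the hypothesis $Z \cdot \varphi(x_0) = 0$ for all $Z \in L_j$. For the inductive step $l-1 \to l$, consider the obstruction
$$B(Z_1, \ldots, Z_l) := (Z_1 \cdots Z_l)\hat\varphi_{l-1}(x_0), \qquad Z_s \in L_{i_s},\ \sum_s i_s \leq j.$$
I claim that $B$ descends to a well-defined symmetric $l$-linear form depending only on the tangent vectors $Z_s(x_0) \in T_{x_0}M$. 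For symmetry, swapping two adjacent factors produces an error term in which $Z_s Z_{s+1}$ is replaced by $[Z_s, Z_{s+1}] \in L_{i_s + i_{s+1}}$; this new operator has one fewer factor (order $l-1$) while its total weight remains $\leq j$, so it still lies in $\calA^j$ and hence annihilates $\hat\varphi_{l-1}$ at $x_0$ by the inductive hypothesis. To check independence from the choice of representatives, if we replace $Z_s$ by $Z_s' \in L_{i_s}$ with $Z_s'(x_0) = Z_s(x_0)$, the difference $W := Z_s - Z_s'$ lies in $L_{i_s}$ and vanishes at $x_0$; pushing $W$ to the outermost left via repeated commutations (each commutator absorbed by the inductive hypothesis) leaves the single surviving term $W(x_0)\cdot(\cdots) = 0$.

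Once $B$ is a well-defined symmetric $l$-linear form on the union of subspaces $\{(v_1,\ldots,v_l) \in (T_{x_0}M)^l : v_s \in L_{i_s}(x_0),\ \sum i_s \leq j\}$, I would extend it by zero (using a basis of $T_{x_0}M$ adapted to the filtration $L_\bullet(x_0)$) to a symmetric $l$-linear form $\tilde B$ on all of $(T_{x_0}M)^l$, and realize $\tilde B$ as the order-$l$ derivative at $x_0$ of a homogeneous polynomial $P_l$ of degree $l$ in $(y_1,\ldots,y_n)$. Setting $\hat\varphi_l := \hat\varphi_{l-1} - P_l$, observation \emph{(a)} preserves the lower-order conditions while \emph{(b)} exactly cancels the obstruction $B$ at order $l$. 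Iterating through $l = j$ and taking $\hat\varphi := \hat\varphi_j$ gives the required function on a neighborhood $U$ of $x_0$.

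The main delicate point is the combinatorial bookkeeping in the symmetry step: one must verify that every commutator produced by a permutation or by the substitution $Z_s \to Z_s'$ stays within $\calA^j$ (same total weight, one fewer factor) so that the inductive hypothesis genuinely applies. This is guaranteed by the structural property $[L_i, L_j] \subset L_{i+j}$ of the filtration, together with the fact that each commutation reduces the number of factors by one.
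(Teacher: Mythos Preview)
Your argument is correct, and it proceeds by a genuinely different route than the paper. The paper does not correct $\varphi$ by polynomials; instead it chooses vector fields $Y_1,\ldots,Y_n$ adapted to the filtration with $Y_i\cdot\varphi(x_0)=0$ for $i<n$ and $Y_n\cdot\varphi(x_0)=1$, introduces the exponential-type chart $(y_1,\ldots,y_n)\mapsto e^{y_nY_n}\cdots e^{y_1Y_1}x_0$, and simply takes $\hat\varphi:=y_n$. The vanishing of $D\hat\varphi(x_0)$ is then obtained by commuting each $Z_s$ past the others (each commutator dropping the order by one while keeping the total weight $\le j$, exactly as in your symmetry step) until $D$ is expressed through products $Y_{i_l}\cdots Y_{i_1}$ with all $i_h\le d_j<n$, which kill $y_n$ at $x_0$ by construction of the chart. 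So the paper's proof is geometric and constructive --- it hands you an explicit $\hat\varphi$ as a coordinate function, which is convenient for the subsequent proof of Proposition~\ref{th:adapted chart}. Your proof is obstruction-theoretic: it works in an arbitrary chart, isolates at each degree $l$ the symmetric $l$-form measuring failure, and kills it with a homogeneous polynomial. What your approach buys is chart-independence and a transparent inductive structure; what the paper's approach buys is a single closed-form $\hat\varphi$ tied to a canonical coordinate system, with no need to assemble a sum $P_2+\cdots+P_j$.
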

\begin{proof}
Let $Y_1,\ldots,Y_n$ be vector fields on $M$ such that they form a basis of $T_{x_0}M$ at $x_0$ and such that
\begin{itemize}
	\item[$\bullet$] $\{Y_1,\ldots,Y_{d_i}\}$ are in $L_i$ and form a basis of $L_i$ at $x_0$, for every $i\leq j$,
	\item[$\bullet$] $Y_i\cdot \varphi(x_0)=0$ for every $i\leq n-1$,
	\item[$\bullet$] $Y_n\cdot \varphi(x_0)=1$.
\end{itemize}
We choose the chart $y=(y_1,\ldots,y_n)$ as the local inverse of the map
$$(y_1,\ldots,y_n)\mapsto e^{y_nY_n}\cdots e^{y_1Y_1}x_0.$$
Then the function $\hat{\varphi}:= y_n$ is such that $d_{x_0}\varphi=d_{x_0}\hat{\varphi}$.

Let $D= Z_1\cdots Z_l\in \calA^j$ with $Z_s\in L_{i_s}$ and $i_1+\cdots+i_l\leq j$ and let us prove the second property required for the function $\hat{\varphi}$, by induction on $l$. Since $d_{x_0}\varphi=d_{x_0}\hat{\varphi}$ and $Z\cdot \varphi(x_0)=0$ for all $Z\in L_{j}$ by hypothesis, the property is satisfied for $l=1$.
For $l>1$, since $Z_l\in L_{i_l}$ we can write $Z_{l}(x_0)=\sum_{i=1}^{d_{i_l}}a_iY_i(x_0)$ for some $a_i$ so
$$D\cdot \hat{\varphi}(x_0)=\sum_{i=1}^{d_{i_l}}a_i\left(Z_{l-1}\cdot Y_i\cdot Z_{l-2}\cdots Z_1+[Y_i,Z_{l-1}]\cdot Z_{l-2}\cdots Z_1\right)\cdot\hat{\varphi}(x_0)$$
The second component on the left side vanishes because, by the definition of the filtration, $[L_i,L_h]\in{L_{i+h}}$, so we can apply on this component the induction hypothesis. By applying again the same commutation we have
$$D\cdot \hat{\varphi}(x_0)=\sum_{i=1}^{d_{i_l}}a_i Z_{l-1}\cdots Z_1\cdot Y_i\cdot\hat{\varphi}(x_0).$$
Iterating the same procedure also to $Z_{l-1},\ldots,Z_1$ we can write $D\cdot \hat{\varphi}(x_0)$ as a linear combination of elements of the type
$$Y_{i_l}\cdots Y_{i_1}\cdot \hat{\varphi}(x_0),$$
with $1\leq i_h\leq d_j<n$ for every $h=1,\ldots, l$. Therefore we get $D\cdot \hat{\varphi}(x_0)=D\cdot y_n(x_0)=0.$
\end{proof}

\begin{proof}[Proof of Proposition \ref{th:adapted chart}]
Let $(U,x)$ be any chart centered at $x_0$. We can get a chart with property $(i)$ of the proposition by a linear change of coordinates. Let us still denote it by $(U,x)$. For every $i\leq n$ let $j$ be such that $d_{j}<i\leq d_{j+1}$. Then the coordinate function $x_i$ satisfies the hypothesis of the Lemma with respect to the integer $j$. By applying the Lemma to each function of the chart we get the statement.
\end{proof}

We will see an example of an adapted chart in Section \ref{sec:nilpotent approximation}.

\subsection{Graded structure}
For $1\leq i\leq m$ we define the integers 
$$k_i:= d_i-d_{i-1},$$
which indicate the number of new coordinates achieved with every new layer of the filtration at $x_0$.

\begin{definition}\label{th:dilation}
Let us denote a point $x=(x_1,\ldots,x_n)\in U$ by the $m$-tuple $(x^1,x^2,\ldots,x^m)\in \bbR^{k_1}\oplus\bbR^{k_2}\oplus\ldots\oplus\bbR^{k_m}$, where each component $x^i:=(x_{d_{i-1}+1},\ldots,x_{d_i})$ is a vector of length $k_i$. Then for every $\epsilon>0$ we define the \emph{anisotropic dilations} around $x_0$ of factor $\epsilon$ as
\begin{equation*}
\delta_\epsilon(x)=\delta_\epsilon(x^1,\ldots,x^m):=(\epsilon x^1,\epsilon^2 x^2,\ldots, \epsilon^m x^m).
\end{equation*}
Notice that if $\epsilon>1$, the map $\delta_\epsilon$ is actually defined only in a smaller neighborhood of $x_0$, but we will still write the map $\delta_\epsilon:U\rightarrow U$, understanding "locally defined".
\end{definition}

The dilations $\delta_\epsilon$ act on every coordinate function and on the coordinate vector fields with a different weight, namely
\begin{equation*}
x_j\circ\delta_\epsilon=\epsilon^i x_j, \quad (\delta_\epsilon)_*\frac{\partial}{\partial x_j}=\epsilon^i\frac{\partial}{\partial x_j} \qquad \forall d_{i-1}<j\leq d_i.
\end{equation*}
For every $1\leq i\leq n$ let $w_j$ be the order of expansion of the coordinate $x_j$, that is $w_j:=i$ if $d_{i-1}<j\leq d_i$. We call $w_j$ the \emph{weight} of the coordinate $x_j$. Then the dilation $\delta_\epsilon$ is a particular choice of the dilations defined in Definition \ref{th:dilation1} with respect to the coordinates induced by the filtration $\calL$ and the weights $(w_1,\ldots,w_n)$.

Accordingly we define the \emph{weight} of a monomial to be
$$\mathcal{W}(x_{1}^{\alpha_1}\cdots x_{n}^{\alpha_n}):=\sum_{j=1}^n\alpha_j w_{j}$$
and the weight of a polynomial to be the greatest order of its monomials. Moreover we define the \emph{graded order}, $\mathcal{O}(g)$,  of a function $g\in C^\infty(U)$ to be the smallest weight of the monomials that appear in any Taylor approximation of $g$ at $x_0$.

For example, let $n=2$ and suppose that $x_1$ has weight 1 and $x_2$ has weight 2. Then the polynomial $x_1x_2-\frac{(x_1)^2(x_2)^2}{6}$ has weight 6, because the two monomials composing it are $x_1x_2$ of weight 3 and the rest of weight 6. On the other hand, $\sin(x_1x_2)=x_1x_2-\frac{(x_1)^2(x_2)^2}{6}+o\left((x_1)^2(x_2)^2\right)$ has graded order 3.

We extend these definitions to differential operators. We say that a polynomial vector field $Z$ is homogeneous of \emph{weight} $i$ if
$$\mathcal{W}(Z \varphi)=\mathcal{W}(\varphi)-i \qquad \forall \mbox{ monomial } \varphi \mbox{ of weight } \mathcal{W}(\varphi).$$
In other words $Z$ subtracts weight $i$ to every function. Then the weight of a polynomial vector field is the smallest weight of its homogeneous components. We define the \emph{graded order}, $\mathcal{O}(D)$, of a differential operator $D$ by saying that
$$\mathcal{O}(D)\leq j \quad \mbox{if and only if}\quad \mathcal{O}(D\cdot \varphi)\geq \mathcal{O}(\varphi)-j \qquad \forall \mbox{ polynomial } \varphi,$$
that is $D$ subtracts at most weight $j$ from the functions. For example the graded order of a vector field like $(x_{1}^{\alpha_1}\cdots x_{n}^{\alpha_n})\frac{\partial}{\partial x_h}$ is
$$\mathcal{O}\left((x_{1}^{\alpha_1}\cdots x_{n}^{\alpha_n})\frac{\partial}{\partial x_h}\right)=w_h-\left(\sum_{j=1}^n\alpha_j w_{j}\right)$$
Coming back to the previous example, the graded order of a field like $\sin(x_1x_2)\frac{\partial}{\partial x_1}$ is obtained as $1-\calO(\sin(x_1x_2))=-2$.

By means of the graded order we can give a generalization of the concept of Taylor approximation of a function up to weight $h$. Namely, for any $\varphi\in C^\infty(U)$ and every integer $h\geq0$, there is a unique polynomial $\varphi_{(h)}$ of weight $h$ such that $\mathcal{O}(\varphi-\varphi_{(h)})\geq h$.
\begin{definition}
The polynomial $\varphi_{(h)}$ is called the \emph{graded approximation of weight $h$ of $\varphi$} and it is the sum of the polynomials of weight less then or equal to $h$ in the formal Taylor expansion of $\varphi$ at $x_0$.
\end{definition}
For every vector field $V\in \mathrm{Vec}(U)$ and each integer $h\leq m$ there is a polynomial vector field $V_{(h)}$ of weight $h$ such that $\mathcal{O}(V-V_{(h)})\leq h-1$.
\begin{definition}
$V_{(h)}$ is called the \emph{graded approximation of weight $h$ of $V$} and it is the sum of the homogeneous vector fields of weight greater than or equal to $h$ in the formal Taylor expansion of $V$ at $x_0$.
\end{definition}
Notice that, since $V_{(h)}$ is a polynomial vector field, we can consider it as defined on the whole Euclidean space $\bbR^n$.

We will see in the next sections how to apply this graded structure in order to underline the most important properties of the operator in \eqref{eq:operator}, concerning the small time asymptotic of its fundamental solution.


\section{Nilpotent approximation and the order of the dilations}\label{sec:4}
In this section we apply the graded structure, that we have just developed, to define a special class of vector fields, which approximate the original one $f_0,f_1,
\ldots, f_n$ and we will give an example to clarify the setting. Finally we will compute how the dilations change the volume form and we will introduce the order that will appear in the asymptotic of the heat kernel.

\subsection{Nilpotent approximation}\label{sec:nilpotent approximation}
Let $(x,U,w)$ be the graded structure around $x_0$ introduced in Section \ref{sec:graded} and let $f_0,f_1,\ldots,f_k$ be the vector fields used to define the filtration of $T_{x_0}M$. Then as proved in \cite{art:BianchiniStefani} Theorem 3.1, for every $f\in L_i$, we have  a bound on the graded order, namely $\calO(f)\leq i$, where $\calO$ is the graded order associated to the graded structure $(x,U,w)$ defined in Section \ref{sec:graded}. 

Recall the integers $l_0,l_1,\ldots,l_k$ introduced in Section \ref{sec:graded} to define the filtration and denote by $\hat{f}_i$ the graded approximation of weight $l_i$ of $f_i$. In other words,  $\hat{f}_i$ has weight $l_i$ and $\calO(f_i-\hat{f}_i)< l_i$. 
\begin{definition}\label{th:nilpotent approximation}
The fields $\hat{f}_0,\hat{f}_1,\ldots,\hat{f}_k$ are called the \emph{nilpotent approximation} of $f_0,f_1,\ldots,f_k$.
\end{definition}
The fields $\hat{f}_i$, for $0\leq i\leq k$, are polynomials, so they can be defined on $\bbR^n$.

We can describe more precisely the structure of the approximating fields $\hat{f}_i$:
\begin{itemize}
	\item[$\bullet$] $\hat{f}_0$ contains the terms of weight $2$; therefore every component $\hat{f}_0^j$ of $\hat{f}_0$ depends only linearly on the coordinates of weight $w_j-2$ and more then linearly on the coordinates of less weight, but does not depend on the coordinates of weight greater then or equal to $w_j-1$, that are $x_h$ with $h>d_{j-2}$. Moreover, since $f_0$ vanishes in $x_0$, there are no constant terms.
	\item[$\bullet$] $\hat{f}_i$ contains the terms of weight $1$ for $i=1,\ldots,k$; therefore every component $\hat{f}_i^j$ of $\hat{f}_i$ depends only linearly on the coordinates of weight $w_j-1$ and more then linearly on the coordinates of less weight, but do not depend on the coordinates of weight greater then or equal to $w_j$, that are $x_h$ with $h>d_{j-1}$.
\end{itemize} 

To make the construction more clear we end this subsection with an example, in which we will present the filtration in $x_0$, the induced adapted chart and the graded structure, and we will find the related nilpotent approximation.

\begin{example}\label{example1}
\emph{Let $M=\bbR^2$, and let the number of controlled vector fields be $k=1$. Define the vector fields
\begin{equation*}
f_1:=\frac{\partial}{\partial x_1}+x_1\frac{\partial}{\partial x_2} \quad \mbox{ and } \quad f_0:=\sin(x_1^2)\frac{\partial}{\partial x_2}
\end{equation*}
and recall the choice of weights $l_0=2$ and $l_1=1$. The non vanishing Lie brackets that contribute to span the tangent space in any point are given by
$$[f_1,f_0]=2x_1 \cos(x_1^2)\frac{\partial}{\partial x_2}, \qquad [f_1,[f_1,f_0]]=(2 \cos(x_1^2)-4x_1^2\sin(x_1^2))\frac{\partial}{\partial x_2}.$$
Then H\"{o}rmander assumption \eqref{eq:hormander} holds in any point and the filtration defined in \eqref{eq:filtration} is equal to
\begin{itemize}
	\item[$\bullet$] $L_1=\mathrm{span}\{f_1\}$
	\item[$\bullet$] $L_2=\mathrm{span}\{f_1,f_0\}$
	\item[$\bullet$] $L_3=\mathrm{span}\{f_1,f_0,[f_1,f_0]\}$
	\item[$\bullet$] $L_4=\mathrm{span}\{f_1,f_0,[f_1,f_0],[f_1,[f_1,f_0]]\}$
\end{itemize}
In particular the filtration in $x_0$ is given by 
\begin{equation}
L_1(x_0)=L_2(x_0)=L_3(x_0)=\mathrm{span}\{\frac{\partial}{\partial x_1}\} \quad \mbox{ and }\quad L_4(x_0)=\bbR^2
\label{eq:example3}
\end{equation}
and the dimensions are: $d_1=d_2=d_3=1$, $d_4=2$.}

\emph{Let us find an adapted chart to the filtration at $x_0$. As one can easily see, the coordinates $(x_1,x_2)$ are not adapted, since $(f_1)^2\cdot x_2(x_0)=1\neq0$ and the second property of the adapted chart then fails. Following the constructive proof of Lemma \ref{th:adapted chart lemma}, one can find that the new coordinates $(y_1,y_2)$ defined by
\begin{equation*}
\left\{\begin{array}{l}
	y_1=x_1-\frac{x_1^2}{2}+x_2\\
	y_2=-\frac{x_1^2}{2}+x_2
\end{array}
\right.
\end{equation*}
give an adapted chart at $x_0$. In this coordinates the two vector fields are written as
$$f_1=\frac{\partial}{\partial y_1} \quad \mbox{ and } \quad f_0=\sin((y_1-y_2)^2)\left(\frac{\partial}{\partial y_1}+\frac{\partial}{\partial y_2}\right).$$
For $\epsilon>0$ the dilations defined in Definition \ref{th:dilation} are
\begin{equation}
\delta_\epsilon:(y_1,y_2)\mapsto (\epsilon y_1,\epsilon^4 y_2).
\label{eq:example4}
\end{equation}
Then the weights of the coordinate functions are $\calW(y_1)=1$ and $\calW(y_2)=4$, while the weights of the coordinate vector fields are $\calW(\frac{\partial}{\partial y_1})=1$ and $\calW(\frac{\partial}{\partial y_2})=4$.}

\emph{Finally, let us write the Taylor expansion of the two vector fields $f_1,f_0$:
\begin{eqnarray*}
f_1&=&\frac{\partial}{\partial y_1}\\
f_0&=&\left(y_1^2-2y_1y_2+y_2^2+o(|(y_1,y_2)|^2\right)\frac{\partial}{\partial y_2}
\end{eqnarray*}
We can see that $f_1$ has already weight $1$, while the only part of weight $2$ in $f_0$ is $y_1^2\frac{\partial}{\partial y_2}$. We can therefore conclude that the nilpotent approximation of $f_0,f_1$ is given by
$$\hat{f}_0=y_1^2\frac{\partial}{\partial y_2} \quad \mbox{ and } \quad \hat{f}_1=\frac{\partial}{\partial y_1}.$$}
\end{example}

\subsection{Order of the dilations}
We will analyze here the order of the dilations, that is the order of homogeneity of the volume form under the action of the dilations. This number will be crucial to find the order of degeneracy of the fundamental solution of the operator \eqref{eq:operator}. 

Let us consider the dilations $\delta_\epsilon$. They were defined by introducing the notation $x=(x^1,\ldots,x^m)$, where each component $x^i$ is a vector of length $k_i=d_i-d_{i-1}$. Then we set $\delta_\epsilon(x^1,x^2,\ldots,x^m)=(\epsilon x^1,\epsilon^2 x^2,\ldots,\epsilon^m x^m)$. Let $N$ be the order of homogeneity of the volume form $dx_1\wedge dx_2\wedge\ldots\wedge d x_n$ around the point $x_0$, that is a number such that
$$(\delta_\epsilon)_*(dx_1\wedge dx_2\wedge\ldots\wedge x_n)=\epsilon^N dx_1\wedge dx_2\wedge\ldots\wedge d x_n.$$
Then $N$ is given by
\begin{equation}
N:=\sum_{i=1}^m i \cdot k_i=\sum_{i=1}^m i \left(\dim L_i(x_0) -\dim L_{i-1}(x_0)\right).
\label{eq:N}
\end{equation}  
Since this number is very important we give here some examples to understand its meaning.

\begin{example}[Continuation of Example \ref{example1}]\label{th:example}
\emph{As a first example we consider the one given in Example \ref{example1}. We have already computed the filtration in the equations in \eqref{eq:example3}, so we already know that the integers $k_i:=\dim L_i-\dim L_{i-1}$ are
$$k_1=1\qquad k_2=k_3=0 \quad \mbox{ and } \quad k_4=1.$$
Therefore the order of the dilations is $N=1\cdot 1+4\cdot 1=5$, as one can compute directly from the explicit expression of the dilations in \eqref{eq:example4}. Notice that this number is much bigger then the dimension of the manifold.}
\end{example}

\begin{example}[Sub-Riemannian manifold]\label{th:sub-Riemannian}
\emph{Let us assume that the operator in \eqref{eq:operator} is induced by an equiregular sub-Riemannian manifold. In other words, we consider an operator without drift field, and the vector fields $f_1,\ldots,f_k$ generate a completely non-holonomic equiregular distribution, $\Delta$, of step $m$. Recall that the growth vector of the distribution is defined as the vector at any point $q$ of the manifold given by
$$(\Delta(q),\Delta^2(q),\ldots,\Delta^m(q)) \qquad \mbox{ where } \Delta^{i+1}:=[\Delta,\Delta^i].$$ 
Then the integers $k_i$ related to the filtration are the same defined by the growth vector, i.e. $k_i=\dim(\Delta_i)-\dim(\Delta_{i-1})$, and the number $N$ is exactly the homogeneous dimension $\calQ$ of the manifold. More explicitly
$$N=\calQ=1 \cdot k+2 \cdot k_2+\cdots m\cdot k_m=\sum_{i=1}^m i(\dim(\Delta_i)-\dim(\Delta_{i-1})).$$
In particular for the $n+1$ dimensional Heisenberg group the homogeneous dimension is $\calQ=n+2$.}
\end{example}

\begin{example}[Linear case]\label{th:involutive}
\emph{As a last example, we consider an involutive distribution $\calD$, spanned locally by $k$ constant vector fields, and assume that the drift field is linear in a neighborhood of $x_0$. Without loss of generality we can assume that 
$$f_i=\frac{\partial}{\partial x_i} \quad \forall 1\leq i\leq k\quad \mbox{ and } \quad f_0=\sum_{i,j=1}^n A_{ij}x_i\frac{\partial}{\partial x_j}$$
for some constants $A_{ij}$. Under these assumptions, the only Lie brackets different from zero are the one involving only one vector field of the distribution and the drift field. Let us call $A$ the $n\times n$ matrix with entries equal to $A_{ij}$ and $B$ the $n\times k$ matrix that is the identity in the first $k$ rows and is equal to zero in the last $n-k$ rows. Then H\"{o}rmander's condition of hypoellipticity \eqref{eq:hormander} becomes Kalman's condition of controllability for linear control systems, that is the following condition on the rank of Kalman's $n\times (nk)$ matrix
\begin{equation}
\mathrm{rank}[B,AB,A^2B,\ldots,A^{n-1}B]=n.
\label{eq:kalman}
\end{equation}
The filtration is then completely determined and we have
$$L_{2i-1}(x_0)=L_{2i}(x_0)=\mathrm{span}\{A^{j}B: 0\leq j\leq i-1\}.$$ 
Consequently the numbers $d_{2i-1}=d_{2i}$ are determined by the rank of the Kalman's matrix in \eqref{eq:kalman}, where we stop the series of matrices at $A^{i-1}B$. The numbers $k_j$ are zero if $j$ is even, while if $j=2i-1$ they are the number of new linearly independent columns obtained by adding the matrix  $A^{i-1}B$ to the previous one.
The step of the distribution is then an odd number $2\tilde{m}-1$ and $N$ is equal to an odd sum of integers:
$$N=\sum_{i=1}^{\tilde{m}}(2i-1)k_{2i-1}=1\cdot k_1+3\cdot k_3+5\cdot k_5+\cdots+(2\tilde{m}-1) k_{2\tilde{m}-1}.$$ 
This number is already appeared in literature, namely in Chapter 4 of \cite{article:AgrachevBarilariRizzi}, where Agrachev, Barilari and Rizzi compute the small time asymptotics of the cost functional associated to any ample, equiregular geodesic $\gamma$. In particular, if one computes the asymptotic of the cost functional associated to the ample, equiregular geodesics $\gamma(t)\equiv x_0$, then the trace of the principal term of the expansion is exactly the same term $N$. We refer to \cite{article:AgrachevBarilariRizzi} for an exhaustive presentation.}
\end{example}


\section{Small time asymptotic on the diagonal}\label{sec:5a}
We come back now to the perturbative method explained in Section \ref{sec:2}. Let 
$$\frac{\partial \varphi}{\partial t}-f_0(\varphi)-\frac{1}{2}\sum_{i=1}^k f_i^2(\varphi) \qquad \forall \varphi\in C^\infty(\bbR\times M)$$
be the differential operator \eqref{eq:operator} on $\bbR^+ \times M$ and assume that $f_0,f_1,\ldots,f_k$ satisfy H\"{o}rmander condition \eqref{eq:hormander}. Fix the graded structure, $(U,x,w)$, around $x_0$, induced by the filtration $\calL$ introduced in Section \ref{sec:graded} and let $p(t,x,y)$ be the fundamental solution of the partial differential equation corresponding to a volume form $\mu$, such that $\mu=dx_1\wedge\ldots\wedge dx_n$ in $U$. By Remark \ref{th:volume} this assumption on $\mu$ is not restrictive for our study. For $0<\epsilon<1$ we dilate the space around $x_0$ with the dilation $\delta_\epsilon$ defined in Definition \ref{th:dilation}, then by Proposition \ref{th:diffeo} the fundamental solution in $U$ of the operator
\begin{equation}
\frac{\partial }{\partial t}-\epsilon^2\left(\delta_{1/\epsilon*}f_0+\frac{1}{2}\sum_{i=1}^k \left(\delta_{1/\epsilon*}f_i\right)^2\right)
\label{eq:operator dilated}
\end{equation}
is
\begin{equation}
q_\epsilon(t,x,y)=\epsilon^Np(\epsilon^2 t,\delta_\epsilon x,\delta_\epsilon y) \qquad\quad \forall x,y\in U
\label{eq:q}
\end{equation}
where $N$ is the order of homogeneity of the volume form $\mu$ computed in \eqref{eq:N}.

The fields $f_0,f_1,\ldots,f_k$ can be written in terms of the nilpotent approximation as 
$$f_0=\hat{f}_0+g_0,\qquad\qquad f_i=\hat{f}_i+g_i \qquad 1\leq i\leq k,$$
where $g_0$ and $g_i$ are vector fields of order less then or equal to $1$ and $0$ respectively. Then the operator in \eqref{eq:operator dilated} can be decomposed in a principal part, perturbed by a small operator:
\begin{eqnarray*}
&&\frac{\partial }{\partial t}-\epsilon^2\left(\delta_{1/\epsilon*}f_0+\frac{1}{2}\sum_{i=1}^k \left(\delta_{1/\epsilon*}f_i\right)^2\right)=\\
&&=\frac{\partial }{\partial t}-\epsilon^2\left(\frac{1}{\epsilon^2}\hat{f}_0+o(\frac{1}{\epsilon^2})+\frac{1}{2}\sum_{i=1}^k \left(\frac{1}{\epsilon}\hat{f}_i+o(\frac{1}{\epsilon})\right)^2\right)\\
&&=\frac{\partial }{\partial t}\hat{f}_0-\frac{1}{2}\sum_{i=1}^k \hat{f}_i^2+o(1)\\
&&=\frac{\partial }{\partial t}-L_0-\calX_\epsilon
\end{eqnarray*}
where $\calX_\epsilon$ is an operator that goes to zero as $\epsilon$ and $L_0$ is the operator defined by the nilpotent approximation:
$$L_0:=\hat{f}_0+\frac{1}{2}\sum_{i=1}^k \hat{f}_i^2.$$

To this kind of operator we can apply Duhamel's formula \eqref{eq:duhamel asymptotic}, but to this end we have to prove that there exists the fundamental solution of the principal operator $\partial/\partial_t=L_0$. As we will prove now, this will follow by the property of hypoellipticity of the original operator, that are preserved by the nilpotent approximation, that defines $L_0$. The same statement can be found also in the paper by Bianchini and Stefani \cite{art:BianchiniStefani}.

\begin{proposition}
Let $\hat{f}_0,\hat{f}_1,\ldots,\hat{f}_k$ be the nilpotent approximation of the fields $f_0,f_1,\ldots,f_k$ defined in Definition \ref{th:nilpotent approximation}. Then
\begin{itemize}
	\item[(i)] for every bracket $\Lambda$ such that $||\Lambda||=i$, then $(\Lambda_f-\Lambda_{\hat{f}})\in L_{i-1}(x_0)$ and $\Lambda_{\hat{f}}(x_0)=0$ whenever $\Lambda_f(x_0)\in L_{i-1}(x_0)$, where $||\Lambda||$ denotes the weight of the bracket $\Lambda$ defined in Definition \ref{th:weight}.
	\item[(ii)] Assume $L_m(x_0)=\Lie_{x_0}\{f_0,f_1,\ldots,f_k\}$, then 
\begin{equation}
\Lie_{x_0}\{f_0,f_1,\ldots,f_k\}=\Lie_{x_0}\{\hat{f}_0,\hat{f}_1,\ldots,\hat{f}_k\}.
\label{eq:Lie}
\end{equation}
\end{itemize}
\end{proposition}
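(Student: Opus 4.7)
The plan is to exploit the decomposition $f_i=\hat{f}_i+g_i$ together with the Bianchini--Stefani bound $\calO(f)\leq j$ for $f\in L_j$. That bound forces every monomial in the Taylor expansion of $f_i$ to have weight at most $l_i$, so the graded approximation $\hat{f}_i$ (which retains the components of weight at least $l_i$) is in fact a homogeneous polynomial vector field of weight exactly $l_i$, while the remainder $g_i$ satisfies $\calO(g_i)\leq l_i-1$. I will repeatedly use the subadditivity $\calO([X,Y])\leq \calO(X)+\calO(Y)$, which reduces to the direct computation that the bracket of monomial vector fields $x^\alpha\frac{\partial}{\partial x_h}$ and $x^\beta\frac{\partial}{\partial x_k}$ is a sum of monomials of weight $(w_h-\calW(x^\alpha))+(w_k-\calW(x^\beta))$. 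I will also use the elementary remark that if a vector field $X$ has graded order at most $j$, then $X(x_0)\in L_j(x_0)$: only the constant term of each component $X^h$ contributes to the evaluation at $x_0$, and a nonzero such constant would create a monomial vector field of weight $w_h$, forcing $w_h\leq j$ and hence $h\leq d_j$.

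For statement (i), I would expand $\Lambda_f$ by multilinearity of the nested bracket as a sum $\sum_S\Lambda_S$, where $S$ ranges over the subsets of positions in $\Lambda$ and $\Lambda_S$ is obtained by inserting $g_{i_j}$ at positions $j\in S$ and $\hat{f}_{i_j}$ at positions $j\notin S$. The empty subset produces $\Lambda_{\hat{f}}$, so
\begin{equation*}
\Lambda_f-\Lambda_{\hat{f}}=\sum_{S\neq\emptyset}\Lambda_S,
\end{equation*}
and each summand has graded order at most $\sum_{j\in S}(l_{i_j}-1)+\sum_{j\notin S}l_{i_j}=||\Lambda||-|S|\leq i-1$. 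The remark above then gives $(\Lambda_f-\Lambda_{\hat{f}})(x_0)\in L_{i-1}(x_0)$, proving the first half of (i). For the second half, observe that $\Lambda_{\hat{f}}$ is a nested bracket of homogeneous vector fields of weights $l_{i_j}$ and is therefore itself homogeneous of weight $\sum l_{i_j}=i$; its value at $x_0$ can only involve $\frac{\partial}{\partial x_h}$ with $w_h=i$, i.e.\ with $d_{i-1}<h\leq d_i$, which places it in a complement of $L_{i-1}(x_0)$ inside $L_i(x_0)$. If $\Lambda_f(x_0)\in L_{i-1}(x_0)$, combining with the first half gives $\Lambda_{\hat{f}}(x_0)=\Lambda_f(x_0)-(\Lambda_f-\Lambda_{\hat{f}})(x_0)\in L_{i-1}(x_0)$, and the transversality forces it to vanish.

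For (ii), I would prove by induction on $j\leq m$ that $L_j(x_0)\subseteq \Lie_{x_0}\{\hat{f}_0,\ldots,\hat{f}_k\}$. The inductive step uses (i): each generator $\Lambda_f(x_0)$ of $L_j(x_0)$ with $||\Lambda||=j$ equals $\Lambda_{\hat{f}}(x_0)$ plus an element of $L_{j-1}(x_0)$, and both summands lie in $\Lie_{x_0}\{\hat{f}_0,\ldots,\hat{f}_k\}$, the first by construction and the second by induction. At $j=m$, together with the hypothesis $L_m(x_0)=\Lie_{x_0}\{f_0,\ldots,f_k\}$, this yields $\Lie_{x_0}\{f_0,\ldots,f_k\}\subseteq\Lie_{x_0}\{\hat{f}_0,\ldots,\hat{f}_k\}$. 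For the reverse inclusion I observe that any bracket $\Lambda_{\hat{f}}$ of weight $i$ is homogeneous of weight $i$; when $i>m$ no monomial $x^\alpha\frac{\partial}{\partial x_h}$ has weight $i$ (since $w_h\leq m$), so $\Lambda_{\hat{f}}\equiv 0$, whereas for $i\leq m$ its evaluation at $x_0$ lies in $L_i(x_0)\subseteq L_m(x_0)=\Lie_{x_0}\{f_0,\ldots,f_k\}$ because its support is contained in $\{\frac{\partial}{\partial x_h}:w_h=i\}$.

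The main technical point is the weight bookkeeping: one must verify that the graded order is genuinely subadditive along iterated brackets (not only single commutators), and one must invoke the adapted-chart identification $L_j(x_0)=\mathrm{span}\{\frac{\partial}{\partial x_1},\ldots,\frac{\partial}{\partial x_{d_j}}\}$ consistently in order to translate graded-order bounds into filtration membership at $x_0$. Once these are in place, both (i) and (ii) reduce to short, essentially algebraic manipulations.
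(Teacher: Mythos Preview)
Your proof is correct and follows essentially the same route as the paper: both hinge on the homogeneity of $\Lambda_{\hat f}$ (bracket of homogeneous fields has additive weight), the adapted-chart identification $L_j(x_0)=\mathrm{span}\{\partial/\partial x_h:w_h\le j\}$, and an induction for part~(ii). The only cosmetic difference is that you make the key step explicit via the multilinear expansion $\Lambda_f=\sum_S\Lambda_S$ to bound $\calO(\Lambda_f-\Lambda_{\hat f})\le i-1$, whereas the paper phrases the same fact as ``$\Lambda_{\hat f}$ is the weight-$i$ homogeneous component of $\Lambda_f$'' and reads off that the coefficients of $\partial/\partial x_h$ with $w_h=i$ agree; you also handle the case $||\Lambda||>m$ in~(ii) explicitly, which the paper leaves implicit.
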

\begin{proof}
Let us prove the first statement. Let $(U,x)$ be coordinates around $x_0$ adapted to the filtration $\{L_i\}_i$. Then for every $i$, 
\begin{equation}
L_i(x_0)=\mathrm{span}\left\{\left.\frac{\partial}{\partial x_1}\right|_{x_0},\ldots,\left.\frac{\partial}{\partial x_{d_i}}\right|_{x_0}\right\}.
\label{eq:L_i}
\end{equation}
Let $\Lambda$ be a bracket such that $||\Lambda||=i$, then as proved in \cite{art:BianchiniStefani} Theorem 3.1, $\calO(\Lambda_f)\leq l_i$, where $\calO$ is the graded order associated to the graded structure induced by the filtration. Therefore there exist constants $a_j$ such that
\begin{equation}
\Lambda_f(x_0)=\sum_{j: l_j\leq l_i} a_j\left.\frac{\partial}{\partial x_{j}}\right|_{x_0}.
\label{eq:lambda}
\end{equation}
Notice that if two vector fields $h_1,h_2$ are homogeneous of graded order respectively $n_1$ and $n_2$, then their Lie bracket is either zero or homogeneous of order $n_1+n_2$. Then the Lie bracket $\Lambda_{\hat{f}}$ is either zero or homogeneous of order $||\Lambda||=i$. Therefore by equation \eqref{eq:lambda}, we have
\begin{equation}
\Lambda_{\hat{f}}(x_0)=\sum_{j: l_j= l_i} a_j\left.\frac{\partial}{\partial x_{j}}\right|_{x_0}.
\label{eq:lambda2}
\end{equation}
By subtracting \eqref{eq:lambda2} to \eqref{eq:lambda}, we find that  $(\Lambda_f-\Lambda_{\hat{f}})\in L_{i-1}(x_0)$, because $L_i(x_0)$ is obtained as in \eqref{eq:L_i}.

For the second statement, let $(U,x)$ be as before and let $\hat{V}(x_0)\in \Lie_{x_0}\{\hat{f}_0,\hat{f}_1,\ldots,\hat{f}_k\}$. Then $\hat{V}(x_0)=\Lambda_{\hat{f}}(x_0)$, for some bracket $\Lambda$ with $||\Lambda||=j$ equal to the graded order of $\hat{V}(x_0)$. Then by expression \eqref{eq:L_i}, there exist $\alpha_i$ such that $\hat{V}(x_0)=\sum_{i:l_i=j}\alpha_i \frac{\partial}{\partial x_i}$. Since $L_j(x_0)\subset L_m(x_0)=\Lie_{x_0}\{f_0,f_1,\ldots,f_k\}$, we have that $\hat{V}(x_0)\in \Lie_{x_0}\{f_0,f_1,\ldots,f_k\}$.

We prove the other inclusion by proving that $L_i(x_0)=\mathrm{span}\{\Lambda_{\hat{f}}(x_0):||\Lambda||\leq i\}$, for every $i$. We prove it by induction on $i$.

For $i=1$, $L_1(x_0)=\mathrm{span}\{\Lambda_{f}(x_0):||\Lambda||\leq 1\}$. Let $\Lambda_f(x_0)\in L_1(x_0)$, then by statement $(i)$, $(\Lambda_f-\Lambda_{\hat{f}})(x_0)\in L_0(x_0)=\{0\}$. Then $\Lambda_f(x_0)=\Lambda_{\hat{f}}(x_0)$ and the statement is true for $i=1$.

Assume that the statement is true for $i-1$, then $L_i(x_0)=\mathrm{span}\{\Lambda_{f}(x_0):||\Lambda||\leq i\}$ and $(\Lambda_f-\Lambda_{\hat{f}})(x_0)\in L_{i-1}(x_0)$. By the induction hypothesis, there exists $g\in \mathrm{span}\{\Lambda_{\hat{f}}(x_0):||\Lambda||\leq i-1\}$ such that
$$\Lambda_f(x_0)=\Lambda_{\hat{f}}(x_0)+g.$$
And the statement is proved also for $i$.

We conclude, since $\Lie_{x_0}\{f_0,f_1,\ldots,f_k\}=L_m(x_0)\subset \Lie_{x_0}\{\hat{f}_0,\hat{f}_1,\ldots,\hat{f}_k\}$.
\end{proof}

\begin{corollary}
The operator $\partial/\partial_t=L_0$ is hypoelliptic on $\bbR^n$.
\end{corollary}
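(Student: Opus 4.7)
The plan is to invoke H\"{o}rmander's hypoellipticity theorem applied to the polynomial vector fields $\hat{f}_0,\hat{f}_1,\ldots,\hat{f}_k$ on $\bbR^n$. Since the fields are $t$-independent, this reduces to checking the bracket condition
$$\Lie_y\{\hat{f}_0,\hat{f}_1,\ldots,\hat{f}_k\}=T_y\bbR^n\qquad\forall y\in\bbR^n.$$
At $y=x_0$ this is immediate: part $(ii)$ of the proposition just proved gives $\Lie_{x_0}\{\hat{f}_0,\ldots,\hat{f}_k\}=\Lie_{x_0}\{f_0,\ldots,f_k\}$, which equals $T_{x_0}M\simeq \bbR^n$ by the standing H\"{o}rmander assumption on the original family. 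So the delicate point is to \emph{propagate} the bracket condition from the origin to every other point of $\bbR^n$.

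The key ingredient is the dilation-homogeneity built into the nilpotent approximation. Each $\hat{f}_i$ was defined as the graded approximation of weight $l_i$, hence $\hat{f}_i$ is a polynomial vector field satisfying $(\delta_\epsilon)_*\hat{f}_i=\epsilon^{l_i}\hat{f}_i$, and consequently for any iterated bracket $\Lambda$ of weight $||\Lambda||$ one has $(\delta_\epsilon)_*\Lambda_{\hat{f}}=\epsilon^{||\Lambda||}\Lambda_{\hat{f}}$. Since $\delta_\epsilon:\bbR^n\to\bbR^n$ is a diffeomorphism and its differential at any point is a linear isomorphism of tangent spaces, the subspace of $T_y\bbR^n$ spanned by the values $\{\Lambda_{\hat{f}}(y)\}$ and the subspace of $T_{\delta_\epsilon(y)}\bbR^n$ spanned by $\{\Lambda_{\hat{f}}(\delta_\epsilon(y))\}$ have the same dimension. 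In particular, the H\"{o}rmander condition at $y$ is equivalent to the H\"{o}rmander condition at $\delta_\epsilon(y)$ for every $\epsilon>0$.

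To conclude, I would fix finitely many brackets $\Lambda^{(1)},\ldots,\Lambda^{(n)}$ whose values at $x_0=0$ are linearly independent (which exist by step one). By continuity of the determinant, there is an open neighbourhood $V$ of $0$ on which these same brackets remain linearly independent, so the H\"{o}rmander condition holds on all of $V$. Given now any $y\in\bbR^n$, choose $\epsilon>0$ small enough that $\delta_\epsilon(y)\in V$; by the dilation-invariance of the spanning property, the condition holds at $y$ as well. This establishes the global H\"{o}rmander condition and hence, by H\"{o}rmander's theorem, the hypoellipticity of $\partial/\partial t-L_0$ on $\bbR\times\bbR^n$.

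The step I expect to be trickiest is keeping the pushforward conventions consistent when transporting bracket values between $y$ and $\delta_\epsilon(y)$ (in particular remembering that $(\delta_\epsilon)_*$ sends $T_y\bbR^n$ isomorphically to $T_{\delta_\epsilon(y)}\bbR^n$, which is what makes the ``spanning at $y$ iff spanning at $\delta_\epsilon y$'' equivalence work). Everything else is a direct combination of the proposition above, the homogeneity properties of the graded approximation established in Section~\ref{sec:graded}, and H\"{o}rmander's original hypoellipticity theorem.
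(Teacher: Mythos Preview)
Your argument is correct and mirrors the paper's proof: establish the bracket condition at $x_0$ via part~(ii) of the preceding proposition, extend to a neighbourhood by (semi\nobreakdash-)continuity of the rank, then propagate to all of $\bbR^n$ using the homogeneity $\delta_{\epsilon*}\hat{f}_i=\epsilon^{l_i}\hat{f}_i$. The paper adds one observation you glide over: the parabolic H\"{o}rmander condition for $\partial/\partial t-L_0$ is not literally $\Lie_y\{\hat{f}_0,\ldots,\hat{f}_k\}=T_y\bbR^n$ (the drift may contribute only through commutators), but since $\hat{f}_0(x_0)=0$ your spanning brackets $\Lambda^{(j)}$ at $x_0$ necessarily exclude $\hat{f}_0$ alone, so the dilation argument still yields the correct condition everywhere.
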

\begin{proof}
By H\"{o}rmander's condition of hypoellipticity we know that $\Lie_{x_0}\{f_0,f_1,\ldots,f_k\}=\bbR^n$. Then the hypothesis of statement $(ii)$ of the Proposition are fulfilled and then also the nilpotent approximation is Lie bracket generating. Moreover, since $f_0(x_0)=0=\hat{f}_0(x_0)$, the approximation of the drift field can give some contribution in the generating process at $x_0$ only trough its Lie brackets with some other vector field. Then even H\"{o}rmander's condition of hypoellipticity \eqref{eq:hormander} holds at $x_0$ for the nilpotent approximation, that is
$$\mathrm{span}_{x_0}\{\frac{\partial}{\partial t}-\hat{f}_0,\hat{f}_1,\ldots,\hat{f}_k\}=\bbR^{n+1}.$$
Using the lower semi-continuity of the rank, we can then find a small neighborhood $U$ of $x_0$ where H\"{o}rmander condition holds at any point. 

Now by the homogeneity of the approximating system we know that
$$\delta_{\epsilon *}\hat{f}_0=\epsilon^2\hat{f}_0\qquad\mbox{ and }\qquad \delta_{\epsilon *}\hat{f}_i=\epsilon\hat{f}_i\quad \forall 1\leq i\leq k.$$
Therefore, since the differential operator commutes with the Lie brackets, we can extend H\"{o}rmander condition, which holds on a neighborhood of $x_0$, to the whole Euclidean space $\bbR^n$ and the operator $\partial/\partial_t=L_0$ is hypoelliptic on $\bbR^n$.
\end{proof}

\begin{remark}
\emph{
The assumption of $f_0(x_0)=0$ is necessaryfor the proof of this corollary. Indeed if $f_0(x_0)\neq 0$ it could be that $L$ is hypoelliptic, but $L_0$ is not.}

\emph{For example, on $\bbR^2$ the fields
$$f_1=\frac{\partial}{\partial x_1}\qquad f_0=(1+x_1)\frac{\partial}{\partial x_2}$$
satisfy H\"{o}rmander condition \eqref{eq:hormander}, but this fails for their nilpotent approximation
$$\hat{f}_1=\frac{\partial}{\partial x_1}\qquad \hat{f}_0=\frac{\partial}{\partial x_2}.$$
}
\end{remark}

We can conclude that the principal part of the operator in \eqref{eq:operator dilated} admits a well defined heat kernel, $q_0(t,x,y)$, for small time, that is given by the density function of the solution, $\xi(t)$, of the stochastic differential equation in Stratonovich form
\begin{eqnarray}
&&d\xi=\hat{f}_0(\xi) dt+\sum_{i=1}^k \hat{f}_i(\xi)\circ dw_i\label{eq:stochastic approx}\\
&&\xi(0)=x \nonumber
\end{eqnarray}
where $w_i$ is a 1-dim Brownian motion for every $1\leq i\leq k$. We can then apply the procedure introduced in Section \ref{sec:2} and we conclude by giving the asymptotic on the diagonal in $x_0$ of the fundamental solution $p(t,x,y)$.

\begin{theorem}\label{th:order}
Let $f_0,f_1,\ldots,f_k$ be smooth vector fields that satisfy H\"{o}rmander condition and $(U,x,w)$ be a graded coordinate neighborhood induced by the filtration $\calL$ defined in \eqref{eq:filtration} around a point $x_0$, where $f_0(x_0)=0$. Let $q_0(t,x,y)$ be the probability density function of the solution $\xi_t$ to equation \eqref{eq:stochastic approx} with initial condition $\xi(0)=x$ and assume that $q_0(1,x_0,x_0)$ is strictly positive. Then the short time asymptotic on the diagonal of the fundamental solution, $p(t,x,y)$, of the heat operator
\begin{equation}
\frac{\partial }{\partial t}-f_0-\frac{1}{2}\sum_{i=1}^k f_i^2
\label{eq:op}
\end{equation}
is given by
\begin{equation}
p(t,x_0,x_0)=\frac{q_0(1,x_0,x_0)}{t^{N/2}}(1+o(1)),
\label{eq:asymptotic theorem}
\end{equation}
where $N$ is the degree of homogeneity of the volume form $dx_1\wedge\ldots\wedge dx_n$ under the action of the dilations $\delta_\epsilon$ computed in \eqref{eq:N}.
\end{theorem}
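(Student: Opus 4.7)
The plan is to execute the perturbative strategy outlined at the end of Section \ref{sec:2} by combining the scaling identity of Proposition \ref{th:diffeo} with Duhamel's formula. First I would apply Proposition \ref{th:diffeo} with the exponent $a=2$ used in the dilated operator \eqref{eq:operator dilated}. Since the anisotropic dilations $\delta_\epsilon$ of Definition \ref{th:dilation} fix the origin $x_0$, setting $t=1$, $x=y=x_0$, and choosing $\epsilon=\sqrt{s}$ in \eqref{eq:q} yields the identity
\begin{equation*}
p(s,x_0,x_0)\;=\;s^{-N/2}\,q_{\sqrt{s}}(1,x_0,x_0).
\end{equation*}
Thus the claimed asymptotic \eqref{eq:asymptotic theorem} is equivalent to showing $q_{\sqrt{s}}(1,x_0,x_0)\to q_0(1,x_0,x_0)$ as $s\searrow 0$, with an $o(1)$ error.

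Next I would invoke the decomposition established at the start of this section, writing the dilated operator as $\partial_t-L_0-\calX_\epsilon$, where $L_0=\hat f_0+\tfrac12\sum_i\hat f_i^2$ is the nilpotent-approximation operator and $\calX_\epsilon$ is a differential operator whose coefficients vanish as $\epsilon\to 0$; the quantitative rate comes from expanding $f_i=\hat f_i+g_i$ with $g_i$ of strictly smaller graded order and then pushing forward by $\delta_{1/\epsilon}$. The corollary preceding the theorem guarantees that $L_0$ is hypoelliptic on all of $\bbR^n$, and the hypothesis $q_0(1,x_0,x_0)>0$ together with the homogeneity of $L_0$ under $\delta_\epsilon$ makes $q_0$ an honest heat kernel at which to expand. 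Applying Duhamel's formula \eqref{eq:qepsilon1} then gives
\begin{equation*}
q_\epsilon(1,x_0,x_0)\;=\;q_0(1,x_0,x_0)\;+\;\int_0^1\!\!\int_{\bbR^n} q_\epsilon(1-\tau,x_0,z)\,(\calX_\epsilon)_z\,q_0(\tau,z,x_0)\,dz\,d\tau.
\end{equation*}

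The core of the argument, and the step I expect to be the main obstacle, is showing that the Duhamel remainder tends to $0$ as $\epsilon\to 0$. This requires three ingredients: (i) a quantitative decomposition $\calX_\epsilon=\sum_\alpha \epsilon^{r_\alpha}Z_\alpha$ with $r_\alpha>0$ and $Z_\alpha$ polynomial vector fields of controlled graded weight, obtained from the Taylor expansion of $f_i-\hat f_i$ under $\delta_{1/\epsilon\,*}$; (ii) pointwise Gaussian-type bounds on $q_0$ and its derivatives $Z_\alpha q_0$ on $\bbR^n$, available from the homogeneous hypoelliptic theory for $L_0$ (e.g.\ via the stochastic representation \eqref{eq:stochastic approx} and Malliavin calculus); and (iii) uniform-in-$\epsilon$ integrability bounds on $q_\epsilon(s,x_0,\cdot)$ for $s\in(0,1)$, which one gets by interpreting $q_\epsilon$ as the density of the rescaled diffusion driven by $\delta_{1/\epsilon\,*}f_0,\delta_{1/\epsilon\,*}f_i$ and noting that its coefficients converge to those of the principal system.

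Given (i)--(iii), dominated convergence applied to the Duhamel integral yields
\begin{equation*}
(q_\epsilon * \calX_\epsilon q_0)(1,x_0,x_0)\;=\;O(\epsilon)\;=\;O(\sqrt{t}),
\end{equation*}
which upon rescaling back through $\epsilon=\sqrt{t}$ gives $q_{\sqrt{t}}(1,x_0,x_0)=q_0(1,x_0,x_0)+o(1)$, and hence \eqref{eq:asymptotic theorem}. The delicate point is establishing (ii) and (iii) with a common class of weighted norms under which the smallness of $\epsilon^{r_\alpha}$ genuinely dominates the (possibly high-order) differential action of $Z_\alpha$ on $q_0$; this is where the choice of the graded coordinate system of Section \ref{sec:graded} is essential, since it forces each remainder vector field $g_i$ to have strictly positive excess weight and thus each $Z_\alpha$ to pair with a strictly positive power of $\epsilon$.
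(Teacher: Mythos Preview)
Your overall architecture is exactly that of the paper: apply Proposition \ref{th:diffeo} with $a=2$ to get $p(t,x_0,x_0)=t^{-N/2}q_{\sqrt t}(1,x_0,x_0)$, decompose the dilated operator as $L_0+\calX_\epsilon$, and use Duhamel's formula \eqref{eq:qepsilon1} to reduce the problem to showing that the remainder $(q_\epsilon*\calX_\epsilon q_0)(1,x_0,x_0)$ is $o(1)$.

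Where you diverge from the paper is in the treatment of that remainder. You propose to decompose $\calX_\epsilon=\sum_\alpha\epsilon^{r_\alpha}Z_\alpha$, then appeal to Gaussian-type hypoelliptic bounds on $q_0$ and its derivatives (via Malliavin calculus) together with uniform-in-$\epsilon$ integrability of $q_\epsilon$, and finish by dominated convergence on $\bbR^n$. The paper instead works on the bounded coordinate patch $U$ and uses a much more elementary device: write $\calZ=\calX_\epsilon/\epsilon$ and split the time integral at $s=1/2$. On $\{0<s<1/2\}$ the factor $\calZ q_0(1-s,\cdot,x_0)$ is smooth and bounded since $1-s$ is bounded away from zero, and one uses only that $\int_U q_\epsilon(s,x_0,y)\,dy\le 1$. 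On $\{1/2<s<1\}$ the roles reverse: $q_\epsilon(s,x_0,\cdot)$ is smooth and bounded, and the derivatives of $q_0$ hitting the singularity at $1-s\to 0$ are controlled by repeated integration by parts, pushing the derivatives onto the bounded factor $q_\epsilon$.

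Both routes are viable. Your plan is heavier: the Gaussian estimates and uniform bounds you list as (ii) and (iii) are genuine theorems that would need to be quoted or proved, and you correctly flag them as the main obstacle. The paper's time-splitting argument sidesteps all of this by exploiting only smoothness of hypoelliptic kernels away from $t=0$ and the probabilistic normalization $\int q\le 1$; no off-diagonal decay or Malliavin machinery is required. If you want to tighten your proposal, the simplest fix is to replace your items (ii)--(iii) with this elementary splitting.
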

\begin{proof}
By Duhamel's formula \eqref{eq:duhamel asymptotic} the asymptotic of the fundamental solution $q_\epsilon$ to the dilated operator as $\epsilon$ tends to 0 is
\begin{equation}
q_\epsilon(t,x,y)=q_0(t,x,y)+q_\epsilon*\calX_\epsilon q_0 (t,x,y),
\label{eq:q approx}
\end{equation}
provided that the remainder term $q_{\epsilon}*\calX_\epsilon q_0$ is negligible for $\epsilon$ small.

Let $\calZ$ be the operator $\frac{\calX_\epsilon}{\epsilon}$. We have to prove that
$$\int_0^1\int_U q_\epsilon(s,x_0,y)\calZ q_0(1-s,y,x_0)dy ds$$
is bounded uniformly in $\epsilon$. We split the integral between the integration on $0<s<\frac{1}{2}$ and the integration on $\frac{1}{2}<s<1$. For the first integral, since we are integrating on a bounded set where $1-s$ is far from zero, $\calZ q_0(1-s,y,x_0)$ is uniformly bounded by a constant $C$. Then
\begin{eqnarray*}
\int_0^{1/2}\int_U q_\epsilon(s,x_0,y)\calZ q_0(1-s,y,x_0)dy ds&\leq & C\int_0^{1/2}\int_U q_\epsilon(s,x_0,y) dy ds\\
&= &C \int_0^{1/2}\int_U \epsilon^N p(\epsilon^2 s,x_0,\delta_\epsilon y)dy ds\\
&\leq & \frac{C}{2}
\end{eqnarray*}
where the last inequality follows since $p$ has integral equal to $1$  over the whole space.

For the other part of the integral, the smooth function $q_\epsilon(s,x_0,y)$ is uniformly bounded on $U$, since $s$ is far from $0$. Moreover, the operator $\calZ$ is a combination of polynomials, that are bounded on the bounded set $U$ and of derivations of $q_0$ of any order. So let us control the integral
$$\int_{1/2}^1\int_U\frac{\partial}{\partial x_i} q_0(1-s,y,x_0) dy ds $$
for every $1\leq i\leq n$. By an integration by parts, it is equal to
$$\int_{1/2}^1\int_{U\cap\bbR^{n-1}} q_0(1-s,y_{U_1},x_0)- q_0(1-s,y_{U_2},x_0) dy_1\cdots \hat{dy_i}\cdots dy_n ds $$
where $y_{U_1}$ and $y_{U_2}$ are the two extremals  of the set $U$ on the line of integration in $y_i$, while $\hat{dy_i}$ denotes that we are not integrating in $dy_i$. This integral is bounded since $q_0$ has integral equal to $1$ on the whole space.

For the higher derivatives, let us assume by induction that for every set of indexes $I$, with $|I|=j$, we have proved that the integral
$$\int_{1/2}^1\int_U\frac{\partial^I}{\partial x_I} q_0(1-s,y,x_0) dy ds $$
is bounded. Let $J$ be a set of indexes such that $|J|=j+1$, and  that it differs from $I$ by an index $i$, then we can integrate by parts in the coordinate $x_i$ the whole integral
\begin{equation*}
\begin{split}
\int_{1/2}^1\int_U q_\epsilon(s,x_0,y) \frac{\partial^{J}}{\partial x_{J}} &q_0(1-s,y,x_0) dy ds =\\
&=\int_{1/2}^1\int_{U\cap\bbR_{n-1}} q_\epsilon(s,x_0,y) \left.\frac{\partial^{I}}{\partial x_{I}} q_0(1-s,y,x_0)\right|^{y_{U_1}}_{y_{U_2}} dy_1\cdots \hat{dy_i}\cdots dy_n ds - \\
&\qquad-\int_{1/2}^1\int_{U\cap\bbR_{n-1}}\int_{U\cap\bbR} \frac{\partial q_\epsilon}{\partial x_i}(s,x_0,y) \frac{\partial^{I}}{\partial x_{I}} q_0(1-s,y,x_0) dy ds 
\end{split}
\end{equation*}
and this is bounded by the induction hypothesis and since $q_\epsilon$ and $\frac{\partial q_\epsilon}{\partial x_i}$ are bounded away from $s=0$.

Now let us come back to the asymptotic in \eqref{eq:q approx}. By the definition of $q_\epsilon$, if in \eqref{eq:q} we fix $t=1$, $x=y=x_0$, and we let $\epsilon$ go to zero as $\epsilon=\sqrt{t}$, we notice that
$$p(t,x_0,x_0)=\frac{q_{\sqrt{t}}(1,x_0,x_0)}{\sqrt{t}^N}.$$
The desired small time asymptotic on the diagonal is then determined by the asymptotic \eqref{eq:q approx} for $q_\epsilon$ and we find
$$p(t,x_0,x_0)=\frac{q_0(1,x_0,x_0)}{t^{N/2}}(1+o(1)),$$
which is well defined since by hypothesis the leading term $q_0(1,x_0,x_0)$ doesn't vanish.
\end{proof}

\begin{remark}\label{th:remark1}
\emph{
The conclusion of Theorem \ref{th:order} holds also for the operator
\begin{equation}
\frac{\partial}{\partial t}-f_0-\frac{1}{2}\sum_{i=1}^k (f_i^2+(\diver_\mu f_i)f_i)
\label{eq:op div}
\end{equation}
where $\mu$ is any volume form on $M$ and $f_0,f_1,\ldots,f_k$ satisfy the assumptions of Theorem \ref{th:order}.
}

\emph{
Indeed we can treat the operator in \eqref{eq:op div} as an operator of the form
$$\frac{\partial}{\partial t}-\tilde{f}_0-\frac{1}{2}\sum_{i=1}^k f_i^2$$
where we take $\tilde{f}_0=f_0+\frac{1}{2}\sum_{i=1}^k (\diver_\mu f_i)f_i$, but in this case we could have $\tilde{f}_0(x_0)\neq 0$. However, if $f_0,f_1,\ldots,f_k$ satisfy H\"{o}rmander condition \eqref{eq:hormander}, then it is satisfied also by $\tilde{f}_0,f_1,\ldots,f_k$. Moreover, the filtration $\tilde{\cal{L}}$ generated by $\tilde{f}_0,f_1,\ldots,f_k$ is equal to the filtration $\cal{L}$ generated by $f_0,f_1,\ldots,f_k$, since $\tilde{f}_0$ differs from $f_0$ by a linear combination of the vector fields $f_1,\ldots,f_k$. Then the approximating system $\hat{\tilde{f}}_0, \hat{f}_1,\ldots,\hat{f}_k$ is the same as the approximating system $\hat{f}_0, \hat{f}_1,\ldots,\hat{f}_k$, hence the operators in \eqref{eq:op div} and in \eqref{eq:op} have the same principal part. The proof of Theorem \ref{th:order} for the operator in \eqref{eq:op div} now follows as the one for the operator \eqref{eq:op}.
}
\end{remark}


\section{The principal operator and the associated control system}\label{sec:5}
In this section we are going to investigate the conditions for the positivity of the heat kernel, $q_0(t,x,y)$, that we have introduced in the last section. 

Let $f_0,f_1,\ldots,f_k$ satisfy H\"{o}rmander condition \eqref{eq:hormander} and consider the principal operator
\begin{equation}
\frac{\partial}{\partial t}-\hat{f}_0-\frac{1}{2}\sum_{i=1}^k \hat{f}_i^2
\label{eq:approx5}
\end{equation}
defined by the approximating system of the original system of vector fields. As already pointed out, it admits a smooth fundamental solution given by the probability density, $q_0(t,x,y)$, of the process $\xi_t$ to be at time $t$ in the point $y$ starting from the point $x$, where $\xi_t$ is the solution of the stochastic differential equation
\begin{equation}
d\xi_t=\hat{f}_0(\xi_t)dt+\sum_{i=1}^k \hat{f}_i(\xi_t)\circ dw_i.
\label{eq:sde}
\end{equation}

In their famous work \cite{art:stroockvaradhan} Stroock and Varadhan characterized the support of $q_0(t,x,y)$ and they showed that it is the set of reachable points from $x$ of the following associated control problem:
\begin{equation}
\dot{x}=\hat{f}_0(x)+\sum_{i=1}^k u_i\hat{f}_i(x)
\label{eq:control}
\end{equation}
where $x:[0,t]\rightarrow\bbR^n$ is a curve in $\bbR^n$ and $u=(u_1,\ldots,u_k)\in L^\infty([0,t];\bbR^k)$ are bounded controls.

Unfortunately, Stroock and Varadhan's result holds only for globally bounded vector fields, with bounded derivatives of any order. Since our vector fields are polynomial, they don't satisfy such assumptions and we can not directly apply the result of the support theorem. We will see in a moment how we can adapt their procedure to our system, but first we introduce two simple lemmas that will simplify our study.

\begin{lemma}\label{th:q positive}
Let $q_0(t,x,y)$ be the probability density of the solution to equation \eqref{eq:sde}. Then for every $\epsilon>0$, for every $t>0$ and for all $x\in \bbR^n$ it holds
$$q_0(t,x_0,x)=\epsilon^N q_0\left(\epsilon^2 t,x_0,\delta_{\epsilon}x\right).$$
In particular, we have the following equivalence
\begin{equation}
q_0(1,x_0,x_0)>0 \quad \Longleftrightarrow \quad q_0(t,x_0,x_0) \quad \forall t>0.
\label{eq:rescale q0}
\end{equation}
\end{lemma}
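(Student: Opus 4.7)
The plan is to exploit the homogeneity of the approximating fields, already recorded in the corollary above: $(\delta_\epsilon)_*\hat{f}_0=\epsilon^2\hat{f}_0$ and $(\delta_\epsilon)_*\hat{f}_i=\epsilon\hat{f}_i$ for $1\le i\le k$. Combined with the classical Brownian scaling, this should make the SDE \eqref{eq:sde} invariant under the simultaneous substitution $t\mapsto\epsilon^2 t$, $\xi\mapsto\delta_{1/\epsilon}\xi$, and from this invariance the density identity will follow by a change-of-variables formula.

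Concretely, I would define the rescaled process $\eta_s:=\delta_{1/\epsilon}(\xi_{\epsilon^2 s})$, with $\xi$ the solution of \eqref{eq:sde} starting at $\xi_0=x_0$. Because we work in the adapted coordinates where $x_0=0$, we have $\eta_0=\delta_{1/\epsilon}(0)=0=x_0$. Applying the Stratonovich chain rule (which is legal because Stratonovich calculus obeys the ordinary change-of-variables rule under smooth maps) to $\eta_s$ gives
\begin{equation*}
d\eta_s=(\delta_{1/\epsilon})_*\hat{f}_0(\eta_s)\,\epsilon^2\,ds+\sum_{i=1}^k (\delta_{1/\epsilon})_*\hat{f}_i(\eta_s)\circ dw_i(\epsilon^2 s).
\end{equation*}
The homogeneity identities give $(\delta_{1/\epsilon})_*\hat{f}_0=\epsilon^{-2}\hat{f}_0$ and $(\delta_{1/\epsilon})_*\hat{f}_i=\epsilon^{-1}\hat{f}_i$, while Brownian scaling says that $B_i(s):=\epsilon^{-1}w_i(\epsilon^2 s)$ is again a standard Brownian motion. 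Substituting, every factor of $\epsilon$ cancels and one obtains
\begin{equation*}
d\eta_s=\hat{f}_0(\eta_s)\,ds+\sum_{i=1}^k \hat{f}_i(\eta_s)\circ dB_i(s),\qquad \eta_0=x_0,
\end{equation*}
which is the same SDE as the one defining $\xi$. By uniqueness in law, $\eta_s\stackrel{d}{=}\xi_s$.

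The density identity now follows by comparing the densities of $\xi_t$ and $\delta_{1/\epsilon}(\xi_{\epsilon^2 t})$ at a point $x$. The former is $q_0(t,x_0,x)$. For the latter, a standard change-of-variables under the diffeomorphism $\delta_{1/\epsilon}$ produces a Jacobian factor equal to $|\det d\delta_\epsilon|=\epsilon^{\sum_i w_i}=\epsilon^N$, where the last equality is \eqref{eq:N} (the sum of the weights, counted with multiplicity $k_i$ for each weight $i$). Hence
\begin{equation*}
q_0(t,x_0,x)=\epsilon^N\,q_0(\epsilon^2 t,x_0,\delta_\epsilon x),
\end{equation*}
which is the claimed formula. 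Specializing to $x=x_0=0$, so that $\delta_\epsilon x_0=x_0$, gives $q_0(t,x_0,x_0)=\epsilon^N q_0(\epsilon^2 t,x_0,x_0)$ for every $\epsilon>0$ and $t>0$. Since $\epsilon^2$ sweeps out all of $(0,\infty)$ as $\epsilon$ does, positivity at $t=1$ propagates to positivity at every $t>0$, and conversely; this yields the equivalence \eqref{eq:rescale q0}.

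The only genuinely delicate point is the SDE change of variables: one has to justify that the Stratonovich calculus together with Brownian rescaling really produces the invariance above. This is classical but must be stated carefully. Existence of smooth densities for both $\xi_t$ and its rescaling (needed so that the change-of-variables equality holds pointwise rather than only almost everywhere) is not an obstacle, as it follows from the hypoellipticity of the principal operator already established in the corollary preceding the lemma.
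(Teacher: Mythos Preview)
Your argument is correct. You exploit the homogeneity of the nilpotent fields via the SDE: after rescaling time by $\epsilon^2$, applying $\delta_{1/\epsilon}$, and using Brownian scaling, the Stratonovich equation reproduces itself, so uniqueness in law together with the Jacobian $\epsilon^N$ of $\delta_\epsilon$ yields the density identity.

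The paper proceeds differently and much more briefly: it simply invokes Proposition~\ref{th:diffeo}. Since $\epsilon^2\,\delta_{1/\epsilon*}L_0=L_0$ by the homogeneity of the approximating fields, Proposition~\ref{th:diffeo} applied to $L_0$ itself (with $a=2$) says that $\epsilon^N q_0(\epsilon^2 t,\delta_\epsilon x,\delta_\epsilon y)$ is again a fundamental solution of $\partial_t-L_0$, and uniqueness of the fundamental solution gives the identity for all $x,y$ at once. In other words, the paper reuses the PDE machinery already set up in Section~\ref{sec:2}, whereas you give a self-contained probabilistic derivation via SDE invariance. Both routes encode the same scaling symmetry; the paper's is shorter because the work was front-loaded into Proposition~\ref{th:diffeo}, while yours has the virtue of not depending on that proposition and of making the role of Brownian scaling explicit. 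Note also that the PDE argument immediately gives the two-variable identity $q_0(t,x,y)=\epsilon^N q_0(\epsilon^2 t,\delta_\epsilon x,\delta_\epsilon y)$, while your SDE argument, as written, fixes the starting point at $x_0$; this is harmless for the lemma as stated but worth observing.
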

\begin{proof}
This is a corollary of Proposition \ref{th:diffeo}, since by definition of $\hat{f}_0,\hat{f}_1,\ldots,\hat{f}_k$ we have 
$$\epsilon^2 \delta_{\frac{1}{\epsilon}*}L_0=L_0.$$
\end{proof}

\begin{lemma}\label{th:neighbor}
Consider the control problem \eqref{eq:control} on $\bbR^n$. Let $y_1,y_2\in \bbR^n$ and $T>0$ be fixed and assume there exists a curve $y:[0,T]\rightarrow\bbR^n$ that satisfies the control problem \eqref{eq:control} for some control function $u\in L^\infty$ and such that $y(0)=y_1$ and $y(T)=y_2$.

Then for any $M>0$ the curve $x(t):=\delta_M \left(y\left(\frac{t}{M^2}\right)\right)$ is an admissible curve for the control problem defined on $[0,M^2T]$ with control $\tilde{u}(t):=\frac{1}{M}u\left(t/M^2\right)$, that connects $x_1:=\delta_M(y_1)$ with $x_2:=\delta_M(y_2)$ in time $M^2T$.

In particular, if the control problem \eqref{eq:control} is controllable in a neighborhood $U$ of $x_0$ in time $T$, then it is controllable in $\delta_M(U)$ in time $M^2T$.
\end{lemma}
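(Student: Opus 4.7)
The plan is to verify directly that $x(t)=\delta_M(y(t/M^2))$ satisfies the stated control equation by combining the chain rule with the homogeneity of the approximating fields under the anisotropic dilation. The essential ingredient is already recorded in the paper: by construction of the nilpotent approximation and of $\delta_\epsilon$, the vector fields $\hat f_0,\hat f_1,\ldots,\hat f_k$ are homogeneous of weights $2,1,\ldots,1$, i.e.
\begin{equation*}
(\delta_M)_*\hat f_0 = M^2\,\hat f_0,\qquad (\delta_M)_*\hat f_i = M\,\hat f_i,\quad 1\le i\le k,
\end{equation*}
as polynomial vector fields on $\bbR^n$, and this is the only nontrivial fact needed.

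Concretely, I would set $s:=t/M^2$ so $x(t)=\delta_M(y(s))$, differentiate by the chain rule to get $\dot x(t)=\tfrac{1}{M^2}(d\delta_M)_{y(s)}\,\dot y(s)$, substitute $\dot y(s)=\hat f_0(y(s))+\sum_{i=1}^k u_i(s)\hat f_i(y(s))$, and then apply the homogeneity relations pointwise, which give $(d\delta_M)_{y(s)}\hat f_0(y(s))=M^2\hat f_0(\delta_M y(s))$ and $(d\delta_M)_{y(s)}\hat f_i(y(s))=M\hat f_i(\delta_M y(s))$. Combining with the $1/M^2$ prefactor from the time rescaling yields
\begin{equation*}
\dot x(t)=\hat f_0(x(t))+\sum_{i=1}^k \frac{u_i(t/M^2)}{M}\,\hat f_i(x(t))=\hat f_0(x(t))+\sum_{i=1}^k \tilde u_i(t)\hat f_i(x(t)),
\end{equation*}
which is exactly the control system with control $\tilde u(t)=u(t/M^2)/M$. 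The control $\tilde u$ lies in $L^\infty([0,M^2T];\bbR^k)$ since $u\in L^\infty([0,T];\bbR^k)$, and the endpoint conditions $x(0)=\delta_M(y_1)=x_1$, $x(M^2T)=\delta_M(y_2)=x_2$ are immediate from the definition.

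For the ``in particular'' assertion, note that in the adapted coordinates $x_0$ is the origin, so $\delta_M(x_0)=x_0$. If every point of a neighborhood $U$ of $x_0$ is reached from $x_0$ by an admissible trajectory in time $T$, then given any $x_2\in\delta_M(U)$ we write $x_2=\delta_M(y_2)$ with $y_2\in U$, take such a trajectory $y$ from $x_0$ to $y_2$, and apply the rescaling above to obtain an admissible trajectory from $x_0$ to $x_2$ in time $M^2T$; hence the control system is controllable in $\delta_M(U)$ in time $M^2T$.

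There is no real obstacle here: once one recalls that the approximating fields are homogeneous and that $x_0$ is fixed by $\delta_M$, the proof is a one‑line chain‑rule calculation. The only point requiring mild care is bookkeeping the two factors of $M$ coming from the spatial pushforward on $\hat f_i$ versus the single factor $M^2$ on $\hat f_0$, against the $M^{-2}$ from the time reparametrization; this is what forces the control rescaling to be $\tilde u=u/M$ rather than $\tilde u=u/M^2$.
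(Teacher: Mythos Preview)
Your argument is correct and follows essentially the same route as the paper: differentiate $x(t)=\delta_M(y(t/M^2))$ via the chain rule, insert the control equation for $y$, and use the homogeneity $(\delta_M)_*\hat f_0=M^2\hat f_0$, $(\delta_M)_*\hat f_i=M\hat f_i$ to obtain the rescaled control $\tilde u=u/M$. You additionally spell out the ``in particular'' clause (using $\delta_M(x_0)=x_0$) and the fact that $\tilde u\in L^\infty$, which the paper leaves implicit.
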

\begin{proof}
The boundary conditions are easily satisfied, since $x(0)=\delta_M(y(0))=\delta_M(y_1)=x_1$ and $x(M^2T)=\delta_M(y(T))=\delta_M(y_2)=x_2$. Moreover, $x(t)$ is an admissible curve for the control problem with control $\frac{1}{M}u_i\left(t/M^2\right)$, indeed by the homogeneity of the approximating system we have
\begin{eqnarray*}
\dot{x}(t)&=&\frac{1}{M^2}\delta_{M*}\left(\dot{y}\left(t/M^2\right)\right)\\
&=&\frac{1}{M^2}\delta_{M*}\left[\hat{f}_0\left(y\left(t/M^2\right)\right)+\sum_{i=1}^k u_i\left(t/M^2\right)\hat{f}_i\left(y\left(t/M^2\right)\right)\right]\\
&=&\frac{1}{M^2}\left[M^2\hat{f}_0\left(y\left(t/M^2\right)\right)+\sum_{i=1}^k M u_i\left(t/M^2\right)\hat{f}_i\left(y\left(t/M^2\right)\right)\right]\\
&=&\hat{f}_0\left(x(t)\right)+\sum_{i=1}^k \frac{1}{M} u_i\left(t/M^2\right)\hat{f}_i\left(x(t)\right).
\end{eqnarray*}
\end{proof}

We generalize here Stroock and Varadhan's support theorem for the control problem defined by the approximating system. Indeed their theorem in \cite{art:stroockvaradhan} can be applied only to fields that are bounded and Lipschitz, with bounded derivatives of the first and second order, and does not hold for general unbounded fields. In our case, the fields are polynomial, so in general not globally bounded. However they are still very particular, because, since $\hat{f}_0$ has weight $2$ and $\hat{f_i}$ has weight $1$, for every $1\leq i\leq k$, every component of these fields depends only on the coordinates of less weight, i.e. $\hat{f}_i^{(j)}$ does not depend on $x_j,x_{j+1},\ldots,x_n$. This will allow us to modify the proof of the support theorem in order to extend it to our case.

\begin{definition}
Consider the control problem \eqref{eq:control} and let us call $x_u(t)$ the solution corresponding to a control $u$. The \emph{reachable set} of the control problem in time $t$ from $x$ is the set, 
$$\calA_t(x):=\left\{y\in \bbR^n\, : \exists u\in L^\infty([0,t];\bbR^k) \mbox{ such that } x_u(0)=x \mbox{ and } x_u(t)=y\right\}.$$
\end{definition}

\begin{lemma}\label{th:support}
Let $X_0,X_1,\ldots,X_k$ be smooth vector fields on $\bbR^n$, that satisfy H\"{o}rmander condition, and such that every $j$-th component $X_i^{(j)}$  of $X_i$, for $0\leq i\leq k$, does not depend on the coordinates $x_j,\ldots,x_n$, but only on the first coordinates $x_1,\ldots,x_{j-1}$. Let $\xi_t$ be the solution of the stochastic differential equation
$$d\xi_t=\tilde{X}_0(\xi_t)dt+\sum_{i=1}^k X_i(\xi_t) dw_t$$
where $\tilde{X}_0$ stands for the vector field whose $j$-component is given by 
$$\tilde{X}_0^{(j)}=X_0^{(j)}+\frac{1}{2}\sum_{i+1}^k\sum_{l=1}^nX_i^{(l)}\frac{\partial X_i^{(j)}}{\partial x_l}.$$ 
Let $p(t,x,y)$ be the probability density of $\xi_t$ to be in $y$ at time $t$ starting from the point $x$. Let $\calA_t(x)$ be the reachable set at time $t$ from $x$ of the associated control problem
\begin{equation}
\dot{x}=X_0(x)+\sum_{i=1}^k u_i(t)X_i(x)
\label{eq:dotx}
\end{equation}
where $u=(u_1,\ldots,u_k)\in L^\infty([0,t];\bbR^k)$ is a control function. Then 
$$\mathrm{supp}(p(t,x,\cdot))=\overline{\calA_t(x)}.$$
\end{lemma}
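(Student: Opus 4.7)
I would reduce the statement to the classical Stroock--Varadhan support theorem via a truncation argument, using the triangular structure $X_i^{(j)} = X_i^{(j)}(x_1, \ldots, x_{j-1})$ to guarantee non-explosion of both the SDE and the controlled ODE. For each $R > 0$, choose smooth, globally bounded vector fields $X_i^R$, with bounded derivatives of all orders, that agree with $X_i$ on the ball $B_R$; let $\xi^R$ solve the SDE with drift $\tilde X_0^R$ and diffusion coefficients $X_i^R$, let $p_R$ be its density, and let $\calA_t^R(x)$ be the reachable set of the truncated control system. The classical Stroock--Varadhan theorem, applied to the bounded truncated system in its path-space form, gives that the sup-norm support in $C([0,t], \bbR^n)$ of the law of $\xi^R$ equals the closure of $\{x_u^R : u \in L^\infty([0,t]; \bbR^k)\}$; evaluating at time $t$ yields $\mathrm{supp}(p_R(t,x,\cdot)) = \overline{\calA_t^R(x)}$.

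The triangular structure next produces a non-explosion estimate for the original system. For the ODE $\dot x = X_0(x) + \sum u_i X_i(x)$ the first component $x^{(1)}$ has right-hand side that does not depend on $x$ at all (only on $u$), so it is bounded on $[0,t]$; inductively each $x^{(j)}$ satisfies an ODE whose right-hand side is polynomial in the already-bounded components $x^{(1)}, \ldots, x^{(j-1)}$, so $x^{(j)}$ is itself bounded on $[0,t]$. For the SDE the analogous induction, via It\^o's formula and Gronwall's inequality, shows that each $\xi^{(j)}_s$ has finite moments of all orders on $s \in [0,t]$; hence $\mathbb{P}(\tau_R \leq t) \to 0$ as $R \to \infty$, where $\tau_R$ denotes the exit time of $\xi$ from $B_R$. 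Pathwise uniqueness then ensures $\xi = \xi^R$ on $\{s < \tau_R\}$.

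For the inclusion $\calA_t(x) \subset \mathrm{supp}(p(t,x,\cdot))$: if $y = x_u(t)$ with $x_u$ staying in $B_{R_0}$, then for $R > R_0$ uniqueness for the ODE gives $x_u = x_u^R$, so $y \in \calA_t^R(x)$; path S--V applied to $\xi^R$ gives positive probability of $\xi^R$ lying in a sup-norm tube around $x_u$ contained in $B_R$, and on that event $\xi^R = \xi$, so every neighborhood of $y$ has positive $p(t,x,\cdot)$-measure. For the reverse inclusion, given $y \notin \overline{\calA_t(x)}$ pick $\rho > 0$ with $B(y,\rho) \cap \calA_t(x) = \emptyset$; for any $R$ with $B(y,\rho) \subset B_{R-1}$, consider the open set
$$\Omega_R := \{\omega \in C([0,t],\bbR^n) : \omega(t) \in B(y,\rho),\ \sup_{0 \leq s \leq t}|\omega(s)| < R-1\}.$$
No control trajectory $x_u^R$ lies in $\Omega_R$: such a trajectory would stay strictly inside $B_R$, where $X_i^R = X_i$, forcing $x_u^R = x_u$ and hence $x_u(t) \in B(y,\rho) \cap \calA_t(x) = \emptyset$, a contradiction. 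Path S--V then yields $\mathbb{P}(\xi^R \in \Omega_R) = 0$, and since $\xi$ and $\xi^R$ coincide before exiting $B_{R-1}$,
$$\mathbb{P}(\xi_t \in B(y,\rho)) \leq \mathbb{P}(\xi^R \in \Omega_R) + \mathbb{P}(\tau_{R-1} \leq t) \xrightarrow[R \to \infty]{} 0.$$

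The principal obstacle is the non-explosion estimate: although the triangular polynomial structure makes it intuitively clear that $\xi$ does not blow up in finite time, converting this into the quantitative statement $\mathbb{P}(\tau_R \leq t) \to 0$ requires an inductive Gronwall argument on $\mathbb{E}[|\xi^{(j)}_s|^p]$ carried out component by component. Constructing the truncations $X_i^R$ with enough regularity for classical Stroock--Varadhan to apply, and matching up the path-space open set $\Omega_R$ with the exit-time splitting, is routine but must be executed carefully.
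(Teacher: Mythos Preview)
Your truncation/localization argument is correct, but it is a genuinely different route from the paper's. The paper does not truncate the fields at all; instead it revisits the key quantitative step inside Stroock--Varadhan's proof, namely that
\[
P_x\bigl(\|\xi-x_u\|_{C[0,t]}<\epsilon \,\big|\, \|w-\psi\|_{C[0,t]}<\delta\bigr)\longrightarrow 1
\qquad (\delta\searrow 0),
\]
and establishes it directly for the unbounded triangular system by an iterated conditioning on the components. Since $X_i^{(1)}$ is constant, the classical bounded estimate gives the limit for the first component; then, \emph{conditioning} on $\{\|\xi^{(l)}-x_u^{(l)}\|<\epsilon\ \forall\, l<j\}$, the coefficients of the $j$-th component become bounded and the classical estimate applies again. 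The product of these conditional probabilities gives the joint statement. Your approach trades this component-wise conditioning for a global non-explosion bound $P(\tau_R\le t)\to 0$ plus pathwise uniqueness on $\{s<\tau_R\}$; this is the more standard localization machinery and has the advantage of cleanly separating the two inclusions (the paper only spells out the harder one, $\overline{\calA_t(x)}\subset\mathrm{supp}\,p$). One small point: you justify non-explosion via moment bounds and call the coefficients ``polynomial'', but the lemma only assumes smooth fields---in the paper's application they are indeed polynomial, but for general smooth triangular fields the cleaner argument is the pathwise one you implicitly use for the ODE: each $\xi^{(j)}$ is driven by coefficients that are continuous functions of the almost-surely bounded paths $\xi^{(1)},\dots,\xi^{(j-1)}$ on $[0,t]$, hence a.s.\ bounded, which already gives $P(\tau_R\le t)\to 0$ without moments.
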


\begin{proof}
Stroock and Varadhan have proved this theorem under the assumption that the fields are Lipschitz and globally bounded, together with their derivatives of first and second order. Following their proof in \cite{art:stroockvaradhan}, we have to show that for a dense set of controls $u$ and $\forall \epsilon>0$
$$P_x(||\xi_t-x_t||<\epsilon):=P(\left.||\xi_t-x_t||<\epsilon\right|\xi_0=x)>0,$$
where $x_t$ is the solution of \eqref{eq:dotx} starting at $x$. In particular let us take $\psi\in C^2(\bbR^+;\bbR^k)$, with $\psi(0)=0$, and let $x_t$ be the solution of \eqref{eq:dotx} starting at $x$ with control $u_i(t):=\dot{\psi}_i(t)$.
Then for all $\epsilon>0$ we show that
\begin{equation}
P_x\left(\left.||\xi_t-x_t||<\epsilon\;\right|\;||w_t-\psi_t|| <\delta\right)\rightarrow 1
\label{eq:varadhan}
\end{equation}
as $\delta\searrow 0$. Indeed 
$$P_x\left(||\xi_t-x_t||<\epsilon\right)=P_x\left(\left.||\xi_t-x_t||<\epsilon\;\right|\;||w_t-\psi_t|| <\delta\right)\cdot P\left(||w_t-\psi_t|| <\delta\right)$$
and $P\left(||w_t-\psi_t|| <\delta\right)>0$ for every $\delta>0$.

Strooch and Varadhan proved \eqref{eq:varadhan} under the boundedness assumption that we don't have directly, but we will recover it by iterating a conditional probability. Indeed, notice that by our assumption the first component of every vector field, $X_i^{(1)}$, does not depend on any coordinate, so they are actually constant and they trivially satisfy Stroock and Varadhan's assumptions. Then the limit in \eqref{eq:varadhan} holds for the process $||\xi_{t}^{(1)}-x_{t}^{(1)}||$. 

Moreover let us assume, by induction, that the first $j-1$ components of $\xi_t$ live in a bounded set. Then the components $X_i^{(j)}$ are Lipschitz and bounded, together with their derivatives of any order, and we can apply Stroock and Varadhan's theorem to the $j$-th component of $\xi_t$ then
$$P_x\left(||\xi_{t}^{(j)}-x_{t}^{(j)}||<\epsilon\;|\;||w_t-\psi_t||<\delta,||\xi_{t}^{(l)}-x_{t}^{(l)}||<\epsilon\; \forall 1\leq l<j\right)\rightarrow 1$$
as $\delta \searrow 0$. 

The proof of \eqref{eq:varadhan} now follows using an iterated conditional probability, indeed in general for every measurable set $A_1,\ldots,A_n,B$, it holds
\begin{eqnarray*}
P\left(\left.\bigcap_{j=1}^n A_j\right\vert B\right)&=& P\left(\left.\bigcap_{j=2}^n A_j\right\vert B\cap A_1\right)P\left(A_1\vert B\right)\\
&=&P\left(\left.\bigcap_{j=3}^n A_j\right\vert B\cap A_1\cap A_2\right)P\left(A_2\vert B\cap A_1\right)P\left(A_1\vert B\right)\\
&\vdots&\\
&=&\prod_{j=1}^n P\left(A_j \left\vert B\cap \bigcap_{l=1}^{j-1} A_l\right.\right).
\end{eqnarray*}
Then
\begin{equation*}
\begin{split}
P&\left(||\xi_t-x_t||<\epsilon\;|\;||w_y-\psi_t||<\delta\right) =\\
&=\prod_{j=1}^n P\left(||\xi_{t}^{(j)}-x_{t}^{(j)}||<\epsilon\;|\;||w_t-\psi_t||<\delta,||\xi_{t}^{(l)}-x_{t}^{(l)}||<\epsilon\; \forall 1\leq l<j\right)\rightarrow 1
\end{split}
\end{equation*}
as
$\delta\searrow 0$ and we have proved \eqref{eq:varadhan} in our case.
\end{proof}

We are now ready to show a condition for the positivity of the fundamental solution of the approximating differential operator.

\begin{theorem}\label{th:positive}
Let $q_0(t,x,y)$ be the fundamental solution of \eqref{eq:approx5}. If the reachable set $\calA_t(x_0)$ of the associated control problem \eqref{eq:control} is a neighborhood of $x_0$ for some $t>0$, then $q_0(1,x_0,x_0)>0$.
\end{theorem}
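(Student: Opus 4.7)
The plan is to reduce, via the scaling, to a single convenient time and then to extract positivity from a Chapman--Kolmogorov factorization.

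First, by Lemma \ref{th:q positive} it is enough to establish $q_0(t_*,x_0,x_0)>0$ for some $t_*>0$; the value at $t=1$ then follows. I shall choose $t_*=2t$, where $t$ is the time from the hypothesis. For this $t$, Lemma \ref{th:support} gives $\mathrm{supp}\,q_0(t,x_0,\cdot)=\overline{\calA_t(x_0)}$, which by assumption contains an open neighborhood $V$ of $x_0$. Since $q_0$ is smooth and non-negative, the set $W:=\{y\in V:q_0(t,x_0,y)>0\}$ is open and dense in $V$.

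The key identity is the Chapman--Kolmogorov equation
\[
q_0(2t,x_0,x_0)=\int_{\bbR^n} q_0(t,x_0,y)\,q_0(t,y,x_0)\,dy,
\]
so it suffices to exhibit a set of positive Lebesgue measure near $x_0$ on which both factors are strictly positive. The first factor is positive on the dense open set $W$. For the second, I apply the support theorem to the formal $L^2$-adjoint of $L_0$. A direct calculation exploiting the graded structure of Section \ref{sec:nilpotent approximation} (namely, the $j$-th component of each $\hat f_i$, $0\le i\le k$, depends only on coordinates of strictly smaller weight, hence not on $x_j$) gives $\diver \hat f_i=0$ with respect to $dx_1\wedge\cdots\wedge dx_n$, and therefore
\[
L_0^{*}=-\hat f_0+\tfrac{1}{2}\sum_{i=1}^{k}\hat f_i^{2}.
\]
In particular, $y\mapsto q_0(t,y,x_0)$ is the heat kernel of $L_0^{*}$ based at $x_0$, and Lemma \ref{th:support} identifies its support as the closure of the reachable set at time $t$ of the adjoint control system $\dot x=-\hat f_0(x)+\sum_{i=1}^{k} v_i\,\hat f_i(x)$ starting at $x_0$.

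To close the argument one needs this adjoint reachable set also to be a neighborhood of $x_0$, whence the second factor is strictly positive on a dense open $\tilde W$ near $x_0$; then $W\cap\tilde W$ carries positive Lebesgue measure and the Chapman--Kolmogorov integral is strictly positive, giving $q_0(2t,x_0,x_0)>0$. The main obstacle is precisely this transfer of the neighborhood property from the forward to the backward drift, which is not automatic for general control systems with drift. I expect to handle it by observing that the Lie brackets of $\{\hat f_0,\hat f_1,\ldots,\hat f_k\}$ and $\{-\hat f_0,\hat f_1,\ldots,\hat f_k\}$ span the same subspaces at $x_0$ (since $\hat f_0(x_0)=0$ and sign flips preserve spans), combined with the scale-invariance of controllability encoded in Lemma \ref{th:neighbor} applied to both systems, which propagates local accessibility across all time scales and symmetrises the hypothesis at the level of the homogeneous nilpotent approximation.
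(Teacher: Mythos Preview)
Your approach mirrors the paper's structurally: the scaling reduction via Lemma \ref{th:q positive}, Chapman--Kolmogorov at time $2t$, the identification $L_0^{*}=-\hat f_0+\tfrac12\sum_i\hat f_i^{2}$ through the divergence-free graded structure, and the support theorem (Lemma \ref{th:support}) applied to both $q_0(t,x_0,\cdot)$ and $q_0(t,\cdot,x_0)$. The difference, and the gap, lies in the final step.

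You attempt to show directly that the \emph{backward} reachable set $\tilde\calA_t(x_0)$ is a neighborhood of $x_0$. The argument you sketch---that the Lie algebras of $\{\hat f_0,\hat f_1,\dots,\hat f_k\}$ and $\{-\hat f_0,\hat f_1,\dots,\hat f_k\}$ coincide at $x_0$, combined with the scaling of Lemma \ref{th:neighbor}---yields only the Lie-algebra rank condition for the backward system, hence only \emph{accessibility} (nonempty interior of $\tilde\calA_t(x_0)$), not local controllability at $x_0$. For systems with drift these are genuinely distinct; no sign-flip or dilation symmetry of the nilpotent approximation exchanges the forward and backward drifts (the formal map $\delta_{-1}$ sends $\hat f_0\mapsto\hat f_0$, not to $-\hat f_0$), and Lemma \ref{th:neighbor} only propagates controllability across time scales within a single system, not to its adjoint.

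The paper closes precisely this gap by arguing by contradiction and requiring only accessibility on the backward side. Assuming $q_0(2t,x_0,x_0)=0$, two applications of Chapman--Kolmogorov force $q_0(t,\cdot,x_0)\equiv0$ on $\overline{\calA_t(x_0)}$ and $q_0(t,x_0,\cdot)\equiv0$ on $\tilde\calA_t(x_0)$. Since $\hat f_0(x_0)=0$ the point $x_0$ lies in $\tilde\calA_t(x_0)$, and Krener's theorem gives $x_0\in\overline{\mathrm{int}\,\tilde\calA_t(x_0)}$; because $\calA_t(x_0)$ is assumed to be a neighborhood of $x_0$, the intersection $\calA_t(x_0)\cap\mathrm{int}\,\tilde\calA_t(x_0)$ is a nonempty open set on which $q_0(t,x_0,\cdot)$ vanishes, contradicting $\mathrm{supp}\,q_0(t,x_0,\cdot)=\overline{\calA_t(x_0)}$. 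Your Lie-algebra observation is exactly the input Krener's theorem needs; the ingredient is correct, but it should be used to obtain accessibility of the backward system, not controllability.
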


\begin{proof}
By Lemma  \ref{th:q positive} it is enough to prove that $q_0(T,x_0,x_0)>0$ for some $T>0$. We will choose $T=2t$. Moreover, by Lemma \ref{th:neighbor}, if $\calA_t(x_0)$ is a neighborhood of $x_0$ for some $t>0$, it is a neighborhood for every $t>0$.

Assume by contradiction that $q_0(2t,x_0,x_0)=0$. By Chapman-Kolmogorov equation we know that
$$0=q_0(2t,x_0,x_0)=\int_{\bbR^n}q_0(t,x_0,y)q_0(t,y,x_0)dy=\int_{\overline{\calA_t(x_0)}}q_0(t,x_0,y)q_0(t,y,x_0)dy,$$
where we can restrict the space of integration, since by Lemma \ref{th:support}, $\mathrm{supp}(q_0(t,x_0,\cdot))=\overline{\calA_t(x_0)}$. Then for all $y\in\overline{\calA_t(x_0)}$ we have $q_0(t,y,x_0)=0$. 
The function $\tilde{q}(t,x,y):=q_0(t,y,x)$ is the fundamental solution of the adjoint operator to $L_0$, that is 
$$\frac{\partial}{\partial t}-L_0^*=\frac{\partial}{\partial t}-\hat{f}_0-\frac{1}{2} \sum_{i=1}^k \hat{f}_i^2.$$
Then $\tilde{q}(t,x,y)$ is the probability density function of the stochastic process $\tilde{\xi}_t$ solution of the stochastic equation
\begin{eqnarray*}
&&d\tilde{\xi}_t=-\hat{f}_0(\tilde{\xi}_t)dt+\sum_{i=1}^k \hat{f}_i(\tilde{\xi}_t)\circ dw_i(t)\\
&&\tilde{\xi}_0=x.
\end{eqnarray*}
By contradiction we have assumed that $\tilde{q}(2t,x_0,x_0)=0$, then again by Chapman-Kolmogorov equation we have
$$0=\tilde{q}(2t,x_0,x_0)=\int_{\tilde{\calA}_t(x_0)}\tilde{q}(t,x_0,y)\tilde{q}(t,y,x_0)dy$$
where $\tilde{\calA}_t(x_0)$ is the reachable set in time $t$ from the point $x_0$ of the associated control problem
$$\dot{x}=-\hat{f}_0(x)+\sum_{i=1}^k u_i(t)\hat{f}_i(x).$$
It follows that $q_0(t,x_0,y)=\tilde{q}(t,y,x_0)=0$ for all $y\in \tilde{\calA}_t(x_0)$. Since the point $x_0$ is a stationary point for the control problem, then $x_0\in \tilde{\calA}_t(x_0)$, for every $t>0$. By Krener theorem, $x_0$ is in the closure of $\mathrm{int}(\tilde{\calA}_t(x_0))$, then $\calA_t(x_0)\cap\tilde{\calA}_t(x_0)$ has non zero measure and $q_0(t,x_0,z)=0$ for all $z$ in this intersection. This is a contradiction to the support theorem.

We conclude that, if $\calA_t(x_0)$ is a neighborhood of $x_0$, then $q_0(t,x_0,x_0)>0$ for all $t>0$.
\end{proof}

\begin{remark}
\emph{In view of Theorem \ref{th:order} and Theorem \ref{th:positive} we can conclude the following properties about the asymptotic of the fundamental solution $p$ of the operator \eqref{eq:operator}.
\begin{itemize}
	\item[(i)] if the control problem associated with the original system:
	\begin{equation}
	\dot{x}=f_0(x)+\sum_{i=1}^k u_i(t) f_i(x)
	\label{eq:conclusion1}
	\end{equation}
	is not controllable around $x_0$, that is $\calA_t(x_0)$ is not a neighborhood of $x_0$, then 
	$$p(t,x_0,x_0)=0\qquad\qquad \forall t>0.$$ 
	Indeed, even if the support theorem can not be applied to this system, we still have that $\mathrm{supp}(p(t,x_0,\cdot))\subset \overline{\calA_t(x_0)}$, therefore $x_0$ is on the boundary or out of the support. Since $p(t,x_0,\cdot)$ is smooth, the conclusion follows.
	\item[(ii)]  if the control problem \eqref{eq:conclusion1} is controllable in $x_0$, then we study the controllability of its nilpotent approximation, defined by the fields $\hat{f}_0,\hat{f}_1,\ldots,\hat{f}_k$ introduced in Section \ref{sec:4}.
	\begin{itemize}
		\item[(ii.1)] if the approximating control problem
		\begin{equation}
		\dot{x}=\hat{f}_0(x)+\sum_{i=1}^k u_i(t) \hat{f}_i(x)
		\label{eq:conclusion2}
		\end{equation}
		is controllable around $x_0$, then the asymptotic is given in Theorem \ref{th:order}, where we see that the fundamental solution on the diagonal in $x_0$ blows up for small $t$ as the rational polynomial $\frac{a_0}{t^{N/2}}$, for a constant $a_0$ depending on the chosen volume and on the approximating system \eqref{eq:conclusion2}. The order $N$ is determined by the Lie algebra generated by the fields $f_0,f_1,\ldots,f_k$ at $x_0$ as explained in formula \eqref{eq:N}.
		\item[(ii.2)] it the approximating control problem \eqref{eq:conclusion2} is not controllable, then either $p(t,x_0,x_0)=0$ for every $t>0$ (but this is not possible if the fields $f_0,f_1,\ldots,f_k$ are bounded, with bounded derivatives, as explained in the proof of Theorem \ref{th:positive}), or $p(t,x_0,x_0)$ goes to infinity for small $t$ faster than $\frac{1}{t^{N/2}}$ as shown in the following Proposition. In \cite{article:BAL1} the authors show an example, where the asymptotic goes to infinity even exponentially fast.
	\end{itemize}
\end{itemize}}
\end{remark}

\begin{proposition}
Assume that the control problem \eqref{eq:conclusion1} is controllable around $x_0$, but the approximating control problem \eqref{eq:conclusion2} is not. If $p(t,x_0,x_0)>0$, then $\lim_{t\searrow 0}t^N p(t,x_0,x_0)=+\infty$, where $N$ is defined in \eqref{eq:N}.
\end{proposition}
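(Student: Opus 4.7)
My plan is a proof by contradiction, combining the dilation identity of Proposition~\ref{th:diffeo}, the support characterization Lemma~\ref{th:support}, and Duhamel's formula~\eqref{eq:duhamel asymptotic}.

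First I would show that non-controllability of the approximating system~\eqref{eq:conclusion2} forces the vanishing of the principal kernel on the diagonal: $q_0(t,x_0,x_0)=0$ for every $t>0$. By Lemma~\ref{th:support} the support of the smooth nonnegative function $q_0(t,x_0,\cdot)$ equals $\overline{\calA_t(x_0)}$. Since $\hat{f}_0(x_0)=0$, the zero control keeps $x_0$ fixed, so $x_0\in\overline{\calA_t(x_0)}$; yet the failure of controllability means $x_0$ is not an interior point of the reachable set, so $x_0$ lies on the boundary of the support, where any smooth nonnegative function must vanish. Lemma~\ref{th:q positive} propagates this to every positive $t$.

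Second, I would assume by contradiction that $\liminf_{t\searrow 0}t^{N}p(t,x_0,x_0)\le K<+\infty$ along a sequence $t_n\searrow 0$. Setting $\epsilon_n:=\sqrt{t_n}$ and using Proposition~\ref{th:diffeo} with $a=2$, the dilation identity rewrites as
\[
q_{\epsilon_n}(1,x_0,x_0)=\epsilon_n^{N}\,p(\epsilon_n^{2},x_0,x_0)=t_n^{N/2}\,p(t_n,x_0,x_0)\le K\,\epsilon_n^{-N},
\]
so the dilated kernel grows along the subsequence at most like $\epsilon^{-N}$, a polynomial rate.

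The key step, which I expect to be the main obstacle, is to combine this polynomial upper bound with an iterated Duhamel expansion to force a contradiction. Writing $q_\epsilon=q_0+q_\epsilon\ast\calX_\epsilon q_0$ and iterating $M$ times produces a finite sum of convolutions of $q_0$ with itself (interleaved by copies of $\calX_\epsilon$), plus a remainder containing $M$ factors of the small operator $\calX_\epsilon$. Each finite-convolution term vanishes at $(1,x_0,x_0)$ because $q_0$ does there, and each factor of $\calX_\epsilon$ carries an extra power of $\epsilon$; choosing $M$ large enough so that $\epsilon^{M}$ beats the polynomial bound $\epsilon^{-N}$, and controlling the spatial integrals via the $L^1$ bound on $q_\epsilon$ and the Gaussian-type decay of $q_0$ as in the proof of Theorem~\ref{th:order}, one would conclude that $q_{\epsilon_n}(1,x_0,x_0)\to 0$, and hence $t_n^{N/2}p(t_n,x_0,x_0)\to 0$. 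Finally, I would rule out this decay via controllability of the original system and Chapman--Kolmogorov: the inequality $p(2s,x_0,x_0)\ge\int_{W}p(s,x_0,y)\,p(s,y,x_0)\,\mu(y)$ over a neighborhood $W$ of $x_0$, combined with the positivity of $p(s,x_0,\cdot)$ there, prevents $p(t,x_0,x_0)$ from shrinking faster than a fixed polynomial rate in $t^{-1}$. The delicate point is to calibrate this lower bound against the iterated Duhamel estimate so that the contradiction closes at precisely the order $t^{-N}$ claimed in the statement; the obstacle compared with Theorem~\ref{th:order} is that we no longer have an a~priori uniform bound on $q_\epsilon$, and the dependence of the remainder on $q_\epsilon$ must be tracked iteratively rather than in a single application of the Duhamel identity.
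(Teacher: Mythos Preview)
Your overall architecture---iterated Duhamel plus the vanishing of the principal kernel on the diagonal---matches the paper's, but the step you label as the key one has a real gap.

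You write that ``each finite-convolution term vanishes at $(1,x_0,x_0)$ because $q_0$ does there.'' This does not follow. The term
\[
\bigl(q_0*\calX_\epsilon q_0\bigr)(1,x_0,x_0)=\int_0^1\!\!\int q_0(1-s,x_0,z)\,\calX_\epsilon q_0(s,z,x_0)\,dz\,ds
\]
involves $q_0(s,z,x_0)$ and its derivatives at intermediate points $z$, not the value $q_0(1,x_0,x_0)$. Pointwise vanishing on the diagonal says nothing about these. What the paper does instead is a support-propagation induction: it shows that for every $y$ in the \emph{open} complement $\overline{\calA_t(x_0)}^{\,c}$ one has $q_0(*\calX_\epsilon q_0)^i(t,x_0,y)=0$. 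The induction step restricts the $z$-integration to $\overline{\calA_s(x_0)}$ by hypothesis, and then uses the semigroup property of reachable sets (if $z\in\overline{\calA_s(x_0)}$ and $y\notin\overline{\calA_t(x_0)}$ then $y\notin\overline{\calA_{t-s}(z)}$) together with Lemma~\ref{th:support} to conclude that $q_0(t-s,z,\cdot)$ vanishes identically near $y$; hence so do all its derivatives, and $\calX_\epsilon q_0(t-s,z,y)=0$. Only after this does one pass to $y=x_0$, which lies on the boundary of the complement, by smoothness. This support argument is the actual content of the proof and is missing from your proposal.

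Your final paragraph (the Chapman--Kolmogorov lower bound and the contradiction-by-subsequence framing) is extraneous to the paper's route. The paper does not argue by contradiction against a polynomial upper bound, nor does it invoke controllability of the original system to produce a lower bound; it simply shows that every term $q_0(*\calX_\epsilon q_0)^i(1,x_0,x_0)$ in the formal expansion is zero, so that the positive quantity $q_\epsilon(1,x_0,x_0)=t^{N/2}p(t,x_0,x_0)$ cannot have a finite nonzero limit governed by that expansion. Your attempt to ``calibrate'' a lower bound against the remainder is both unnecessary and, as you yourself flag, not obviously closable.
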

\begin{proof}
To show this we need to prove that all the coefficients in the asymptotic \eqref{eq:asymptotic theorem} vanish. The asymptotic was found introducing the fundamental solutions $q_\epsilon$ and $q_0$ and by using Duhamel's formula \eqref{eq:duhamel asymptotic}. By iterating it we can achieve a better approximation of the asymptotic and find the higher coefficients. Indeed,we obtain
$$q_\epsilon=q_0+\sum_{i=1}^j q_0\left(*\calX_\epsilon q_0\right)^i+q_\epsilon\left(*\calX_\epsilon q_0\right)^{j+1}$$
for every $j\in\bbN$, where $\left(*\calX_\epsilon q_0\right)^i$ means that we iterate the convolution $i$ times. We have to show that all the terms $q_0\left(*\calX_\epsilon q_0\right)^i$ vanish at the point $(1,x_0,x_0)$ for every $i\in \bbN$. 

By Lemma \ref{th:support} we know that $q_0(t,x_0,y)=0$ for every $y\in \overline{\calA_t(x_0)}^c$ and all $t>0$. Let us assume by induction that 
\begin{equation}
q_0\left(*\calX_\epsilon q_0\right)^i(t,x_0,y)=0\qquad \forall y\in \overline{\calA_t(x_0)}^c, \; \forall t>0.
\label{eq:conclusion3}
\end{equation}
Then for every $y\in \overline{\calA_t(x_0)}^c$ we have
$$q_0\left(*\calX_\epsilon q_0\right)^{i+1}(t,x_0,y)=\int_0^t\int_{\overline{\calA_s(x_0)}}q_0\left(*\calX_\epsilon q_0\right)^i(s,x_0,z)\;\calX_\epsilon q_0(t-s,z,y)dz ds,$$
where the integral can be computed just on $\overline{\calA_s(x_0)}$ by the induction hypothesis. But $q_0(t-s,z,y)\equiv 0$ on $\overline{\calA_s(x_0)}$ by Chapman-Kolmogorov equation, then also all its derivatives vanish there. Then the integral is zero and we have proved \eqref{eq:conclusion3} for every $i$.

Since $q_0$ is smooth and $x_0$ is on the boundary of $\overline{\calA_t(x_0)}^c$, then equation \eqref{eq:conclusion3} holds also in the point $(t,x_0,x_0)$ for every $t>0$, that means that all the coefficients of the asymptotic \eqref{eq:asymptotic theorem} are zero.
\end{proof}


\section{Examples}\label{sec:examples}
We end this paper with a study of some known examples, to understand better the meaning of Theorem \ref{th:order}.
\begin{example}[Continuation of Example \ref{example1}]
\emph{We complete the study of Example \ref{example1}. We have already computed the principal part of the dilated operator $L_\epsilon$, that  is
$$\frac{\partial}{\partial t}=\hat{f}_0+\frac{1}{2}\hat{f}_1^2=x_1^2\frac{\partial}{\partial x_2}+\frac{1}{2}\frac{\partial^2}{\partial x_1^2}.$$
This operator is indeed hypoelliptic since $\hat{f}_1=\frac{\partial}{\partial x_1}$ and $[\hat{f}_1,[\hat{f}_1,\hat{f}_0]]=2\frac{\partial}{\partial x_2}$ span the whole tangent space in every point. Let $q_0(t,x,y)$ be the density function of the solution $\xi(t)$ of the stochastic equation \eqref{eq:stochastic approx}, that in coordinates is given by
$$d\xi_1=dw_1,\qquad d\xi_2=x_1^2dt.$$
We can see that the second coordinate is actually deterministic and has positive derivative, so it can only increase. Consequently, if a path starts in $x_0=(0,0)$ the solution $\xi(t)$ will almost surely never come back to $x_0$ again, indeed $x_0$ is on the boundary of the support of $q_0(t,x_0,\cdot)$. Then the hypothesis of Theorem \ref{th:order} that requires $q_0(1,x_0,x_0)>0$ is not fullfilled.} 

\emph{Since the original control problem is controllable, this is an example of the type (ii.2).}
\end{example}

\begin{example}[Sub-Riemannian manifold: continuation of Example \ref{th:sub-Riemannian}]
\emph{The study of the asymptotic on the diagonal of the heat kernel on 3D contact sub-Riemannian manifolds has already been performed by Barilari in \cite{article:Barilari}. The nilpotent approximation of a sub-Riemannian manifold of dimension 3 is isometric to the Heisenberg group. Let us represent the Heisenberg group as $\bbR^3$ with coordinates $(x,y,z)$, then the approximating system can be written as
$$\hat{f}_1=\frac{\partial}{\partial x}+\frac{y}{2}\frac{\partial}{\partial z} \quad \mbox{ and } \quad \hat{f}_2=\frac{\partial}{\partial y}-\frac{x}{2}\frac{\partial}{\partial z}$$
As one can easily verify, the order of homogeneity of the volume form is given by $N=4$, as computed also with the general formula \eqref{eq:N}.}

\emph{The principal part of the dilated operator is hypoelliptic and symmetric, the associated control problem is then controllable, so there exists a well-defined symmetric heat kernel, that is positive for every $t>0$ as seen in Theorem \ref{th:positive}. The hypothesis of Theorem \ref{th:order} are then fullfilled and we find that the asymptotic on the diagonal of the original heat kernel $p(t,q,q')$ has the following order:
$$p(t,q,q)=\frac{a_0(q)+o(1)}{\sqrt{t}^4} \qquad \mbox{ for a smooth function }a_0(q)>0 \mbox{ on the manifold}.$$
This was the same order found by Barilari in \cite{article:Barilari}. }

\emph{This example is of the type (ii.1).}
\end{example}

\begin{example}[Ben Arous and L\'{e}andre]
\emph{We consider here an example studied by Ben Arous and L\'{e}andre in \cite{article:BAL1}. Consider the space $\bbR^2$ with coordinates $(x_1,x_2)$ and let 
$$f_0=x_1^a\frac{\partial}{\partial x_2}, \qquad f_1=\frac{\partial}{\partial x_1}, \qquad f_2=x_1^b\frac{\partial}{\partial x_2},$$
where $a$ and $b$ are positive integers. Then $x_0=(0,x_2)$ is a stationary point of the drift field for any $x_2\in\bbR$. The operator $L=f_0+\frac{1}{2}(f_1^2+f_2^2)$ satisfies even the strong H\"{o}rmander condition, i.e. the fields $f_1,f_2$ alone are Lie bracket generating.}

\emph{In \cite{article:BAL2}, it is shown the complete behaviour of the heat kernel $p(t,x,y)$ of this operator on the diagonal. We summarize here the most interesting properties for our study:
\begin{theorem}\label{th:Ben}
\begin{enumerate}
\item If $b\leq a+1$, then there exists a constant $K(a,b)>0$ such that
\begin{equation}
p(t,x_0,x_0)\sim \frac{K(a,b)}{\sqrt{t}^{b+2}}.
\label{eq:thm1}
\end{equation}
\item If $b> a+1$ and $a$ is even, then the fundamental solution $p(t,x_0,x_0)$ decreases with exponential velocity.
\end{enumerate}
\end{theorem}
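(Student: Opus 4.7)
My plan is to invoke Theorem~\ref{th:order} directly for Part~1 and to combine Lemma~\ref{th:support} with a Freidlin-Wentzell large deviation estimate for Part~2. For Part~1, the first step is to compute the filtration at $x_0$: iterated brackets give $\mathrm{ad}^j_{f_1} f_2 \propto x_1^{b-j}\partial/\partial x_2$ of weight $j+1$ and $\mathrm{ad}^j_{f_1}f_0 \propto x_1^{a-j}\partial/\partial x_2$ of weight $j+2$, so when $b\leq a+1$ the direction $\partial/\partial x_2$ first appears at $x_0$ inside $L_{b+1}$ via $\mathrm{ad}^b_{f_1} f_2$. Hence $d_j=1$ for $1\leq j\leq b$ and $d_{b+1}=2$, and formula~\eqref{eq:N} produces $N=b+2$. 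Reading off the homogeneous parts, the nilpotent approximation is $\hat f_1=\partial/\partial x_1$, $\hat f_2 = x_1^b\partial/\partial x_2$, and $\hat f_0 = x_1^a\partial/\partial x_2$ when $b=a+1$, else $\hat f_0=0$. In all subcases $\hat f_1,\hat f_2$ alone already bracket-generate the tangent space at $x_0$, and when a drift is present $\dot x_2 = x_1^a(1+u_2 x_1)$ takes both signs for small $|x_1|$ via a suitable choice of $u_2$; therefore the approximating control problem is locally controllable around $x_0$. Theorem~\ref{th:positive} then gives $q_0(1,x_0,x_0)>0$ and Theorem~\ref{th:order} produces~\eqref{eq:thm1} with $K(a,b):=q_0(1,x_0,x_0)$.

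For Part~2, when $b>a+1$ the filtration instead has $w_2=a+2$, so the nilpotent approximation becomes $\hat f_0 = x_1^a\partial/\partial x_2$, $\hat f_1=\partial/\partial x_1$, $\hat f_2=0$ (the weight of $f_2$ is $a+2-b<1$). The approximating control system reduces to $\dot x_1=u_1$, $\dot x_2=x_1^a$, and $a$ even forces $x_2$ to be monotone nondecreasing, so $\calA_t(x_0)\subset\{x_2\geq 0\}$ fails to be a neighborhood of $x_0$. Lemma~\ref{th:support} then gives $q_0(t,x_0,x_0)=0$ for all $t>0$, and the iteration of Duhamel's formula from the Proposition preceding this section shows that every coefficient in the polynomial asymptotic of $p(t,x_0,x_0)$ vanishes.

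The hard part is converting this ``faster than any polynomial'' statement into a genuine exponential decay, which is outside the scope of Theorem~\ref{th:order}. My plan here is a Freidlin-Wentzell large deviation estimate on the SDE~\eqref{eq:differential equation}: one expects $\limsup_{t\searrow 0}\, t\log p(t,x_0,x_0)\leq -\tfrac{1}{2}\inf\|u\|_{L^2}^2$, the infimum ranging over $L^2$ controls steering $x_0$ back to $x_0$ in unit time for the rescaled system. Because $a$ is even, any such loop must cancel $\int_0^1 x_1^a\,ds\geq 0$ against $\int_0^1 u_2 x_1^b\,ds$, and a variational argument shows this infimum is strictly positive, yielding the claimed exponential decay.
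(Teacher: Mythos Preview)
Your treatment of Part~1 is essentially the same as the paper's: compute the filtration, read off $N=b+2$, identify the nilpotent approximation, check controllability of the approximating control problem, and invoke Theorems~\ref{th:positive} and~\ref{th:order}. The paper's controllability argument for the case $b=a+1$ is slightly more direct than yours (it simply notes that $\hat f_1,\hat f_2$ already satisfy the strong H\"ormander condition, so the drift does not obstruct controllability), but your sign-change argument for $\dot x_2 = x_1^a(1+u_2 x_1)$ is also valid.

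For Part~2 you should be aware that the paper does \emph{not} prove the exponential decay. Theorem~\ref{th:Ben} is quoted as a result of Ben~Arous and L\'eandre \cite{article:BAL2}; the paper only verifies consistency with its own framework by showing that when $b>a+1$ and $a$ is even the approximating system has $\hat f_2=0$, the approximating control problem $\dot x_1=u_1$, $\dot x_2=x_1^a$ is not controllable around $x_0$, hence $q_0(t,x_0,x_0)=0$ and Theorem~\ref{th:order} does not apply. That is exactly what you do in your second paragraph. The paper then simply cites \cite{article:BAL2} for the exponential behaviour.

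Your Freidlin--Wentzell sketch therefore goes beyond what the paper attempts, and as written it is not a proof. Two concrete gaps: (i) the vector fields here are polynomial and unbounded, so the standard small-noise large-deviation upper bound for the density requires justification (the same issue that forced the modification of the support theorem in Lemma~\ref{th:support}); (ii) the claim that the infimum of $\|u\|_{L^2}^2$ over loops at $x_0$ is strictly positive is the heart of the matter and your ``variational argument'' is not supplied --- one needs to show that forcing $\int_0^1 u_2\, x_1^b\,ds$ to cancel a strictly positive $\int_0^1 x_1^a\,ds$ costs a definite amount of $L^2$-energy uniformly over admissible $x_1(\cdot)$, which is nontrivial because $x_1$ can be small. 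The actual proof in \cite{article:BAL2} is substantially more delicate.
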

The results found in this paper agree with the statement of the theorem. Indeed, let us consider the filtration $\calL$ given by the vector fields $f_0,f_1,\ldots,f_k$ at a point $x_0=(0,x_2)$. Let $m:=\min\{a+2; b+1\}$. It is easy to verify that the subspaces of the filtration $\calL$ are
$$L_i(x_0)=\left\{\begin{array}{ll}
\bbR\times\{0\} & \mbox{if } 1\leq i<m\\
\bbR^2 & \mbox{if } i=m.
\end{array}
\right.$$
Accordingly the coordinate $x_1$ has weight 1, while the coordinate $x_2$ has weight $m$ and the order of homogeneity of the volume form is given by
$$N=1+m=\left\{\begin{array}{ll}
b+2 & \mbox{if } b\leq a+1\\
a+3 & \mbox{if } b\geq a+1.
\end{array}
\right.$$
To determine the nilpotent approximation, it is convenient to divide the study in 3 cases, depending on the value of $a$ w.r.t. $b$.}

\emph{If $b< a+1$, then $m=b+1$. The nilpotent approximation is obtained by taking the Taylor expansion of the field $f_0$ of order $2$ and the Taylor expansion of order 1 of $f_1$ and $f_2$. Then we find that $\hat{f}_1=f_1$, $\hat{f}_2=f_2$ and $\hat{f}_0=0$. The principal part of the dilated operator is $\frac{1}{2}(\hat{f}_1^2+\hat{f}_2^2)$, that is hypoelliptic. Then there exists a well-defined heat kernel, $q_0(t,x,y)$, and, since the associated control system is controllable, $q_0(t,x_0,x_0)>0$ for every $t>0$. Then the hypothesis of Theorem \ref{th:order} are fulfilled and we find that the small time asymptotic  of the fundamental solution of $L$ has order $N/2=\frac{b+2}{2}$, that is exactly the one given in \eqref{eq:thm1}.}

\emph{If $b= a+1$, then $m=b+1$ and the nilpotent approximation is equal to the fields $f_1,f_2,f_0$ themselves. Since $f_1,f_2$ are Lie bracket generating the associated control system
$$\dot{x}=f_0+u_1 f_1+u_x f_2$$
is still controllable, then the heat kernel $q_0(t,x_0,x_0)$ is positive for every $t>0$, and we obtain again the statement \eqref{eq:thm1}.}

\emph{If $b> a+1$, then $\hat{f}_1=f_1$, $\hat{f}_0=f_0$ and $\hat{f}_2=0$. The principal operator $\hat{f}_0+\frac{1}{2}\hat{f}_1^2$ is still hypoelliptic, but if $a$ is even, then the heat kernel $q_0$ is zero in $x_0=(0,x_2)$ for any $t>0$. This is because a.e. path starting from $x_0$ will never come back to $x_0$ again, since the drift $f_0$ makes the first coordinate increase, if $x_1$ becomes different from 0. Then we can not apply Theorem \ref{th:order} and indeed Ben Arous and L\'{e}andre have shown an exponential decrease in this case.}
\end{example}

\bigskip
\noindent
{\bf Acknowledgments.}
{The author is grateful with Andrei Agrachev for many useful discussions and for introducing to the studied problem, and with Davide Barilari, for his interest in the subject and for many illuminating questions and remarks. The author has been partially supported by the Institut Henri Poincar\'{e}, Paris, where part of this research has been carried out.}

\bibliography{bibliografia}
\bibliographystyle{plain} 
\end{document}